\newtheorem*{rep@theorem}{\rep@title}
\newcommand{\newreptheorem}[2]{%
	\newenvironment{rep#1}[1]{%
		\def\rep@title{#2 \ref{##1}}%
		\begin{rep@theorem}}%
		{\end{rep@theorem}}}
\numberwithin{equation}{section}
\newtheorem{theorem}{Theorem}[section]
\newtheorem{lemma}[theorem]{Lemma}
\newtheorem{proposition}[theorem]{Proposition}
\newtheorem{corollary}[theorem]{Corollary}
\theoremstyle{definition}
\newtheorem{example}[theorem]{Example}
\newtheorem{definition}[theorem]{Definition}
\newtheorem{construction}[theorem]{Construction}
\newtheorem{remark}[theorem]{Remark}
\newtheorem{notation}[theorem]{Notation}
\newtheorem*{remark*}{Remark}
\newtheorem{claim}{Claim}
\newtheorem*{claim*}{Claim}
\newenvironment{claimproof}[1]{\par\noindent \underline{Proof of Claim\space#1}}{\par}
\newcommand{\sm}{\setminus}
\newcommand{\N}{\mathbb{N}}
\newcommand{\T}{\mathbb{T}}
\newcommand{\Z}{\mathbb{Z}}
\newcommand{\Q}{\mathbb{Q}}
\newcommand{\R}{\mathbb{R}}
\newcommand{\C}{\mathbb{C}}
\newcommand{\F}{\mathcal{F}}
\newcommand{\MM}{\mathcal{M}}
\newcommand{\LL}{\mathcal{L}}
\newcommand{\VV}{\mathcal{V}}
\newcommand{\saw}[1]{
	\left( \! \left(#1\right) \! \right)}
\newcommand{\saww}[1]{
	\Big( \!\! \Big(#1\Big)\!\!  \Big)}
\newcommand{\abs}[1]{
	\left|#1\right|}
\newcommand{\flooor}[1]{
	\Big\lfloor#1\Big\rfloor}
\DeclareMathOperator{\sgn}{sgn}
\DeclareMathOperator{\sign}{sign}
\DeclareMathOperator{\ev}{ev}
\DeclareMathOperator{\Hom}{Hom}
\DeclareMathOperator{\area}{area}
\DeclareMathOperator{\integers}{int}
\DeclareMathOperator{\lk}{lk}
\DeclareMathOperator{\inte}{int}
\DeclareMathOperator{\ab}{ab}
\DeclareMathOperator{\spec}{Spec}
\DeclareMathOperator{\Span}{Span}
\newcommand{\lmfrac}[2]{\mbox{\small$\displaystyle\frac{#1}{#2}$}}
\newcommand{\lmcup}[2]{\mbox{\small$\displaystyle\bigcup\limits_{#1}^{#2}$}}
\title{The Atiyah-Patodi-Singer rho invariant and signatures of links}
\author{Enrico Toffoli}
\address{Fakult\"at f\"ur Mathematik\\ Universit\"at Regensburg\\   Germany}
\email{enricotoffoli@gmail.com}
\begin{document}

\begin{abstract}
	Relations between the Atiyah-Patodi-Singer rho invariant and signatures of links have been known for a long time, but they were only partially investigated. 
	In order to explore them further, we develop a versatile cut-and-paste formula for the rho invariant, which allows us to manipulate manifolds in a convenient way.
	With the help of this tool, we give a description of the multivariable signature of a link $L$ as the rho invariant of some closed $3$-manifold $Y_L$ intrinsically associated to $L$. 
We study then the rho invariant of the manifolds obtained by Dehn surgery on a $L$ along integer and rational framings.
Inspired by related results of Casson and Gordon and Cimasoni and Florens, we give formulas expressing this value as a sum of the multivariable signature of $L$ and some easy-to-compute extra terms.
\end{abstract}
\maketitle
\section{Introduction}
Given a closed, oriented manifold $N$ of odd dimension, together with a representation $\alpha\colon \pi_1(N)\to U(n)$ for some $n\in \N$, we can consider the Atiyah-Patodi-Singer rho invariant $\rho_\alpha(N)$. This is a real number defined as the difference between the eta invariant of the twisted and the untwisted odd signature operator associated to any Riemannian metric on $N$, and it turns out to be independent of the choice of the metric \cite{aps1, aps2}. 
The rho invariant is thus defined as a spectral invariant, but it has has the following fundamental property that relates it to well-studied topological invariants: if $N$ is the boundary of a compact, oriented manifold $M$ such that the representation $\alpha$ extends to $\pi_1(M)$, then it can be computed as
\begin{equation}\label{eqintroaps}
\rho_\alpha(N) =n \, \sigma(M) - \sigma_\alpha(M),
\end{equation}
where $\sigma(M)$ denotes the ordinary signature of $M$, and $\sigma_\alpha(M)$ is the twisted signature associated to the chosen extension $\alpha\colon \pi_1(M)\to U(n)$. 

In knot theory, rho invariants are used to give descriptions and generalizations of knot and link signatures. One of the basic observations is that, if 
$S_K$ is the closed manifold obtained by $0$-framed surgery on $K$ and
$\alpha\colon \pi_1(S_K)\to U(1)$ is the representation sending the meridian of $K$ to $\omega\in U(1)$, then 
\begin{equation} \label{eqintroknot}
\rho_\alpha(S_K)=-\sigma_K(\omega),
\end{equation}
where $\sigma_K$ is the Levine-Tristram signature function of $K$.
 Atiyah-Patodi-Singer rho invariants associated to higher-dimensional non-abelian representations were used in knot theory by Levine
\cite{levine, levine2} and Friedl \cite{friedlknots, friedllinks} as obstructions to knot and link concordance.
Multivariable signatures of links, defined by Cimasoni and Florens \cite{cf} using generalized Seifert surfaces, were given a description as twisted signatures of $4$-manifolds and also employed as concordance invariants \cite{cf,cnt, dfl}. However, a description of them as Atiyah-Patodi-Singer rho invariants was not investigated thoroughly. One of the goals of this paper is to fill this gap.

Given the difficulty in computing rho invariants directly, it can be very useful to study their behaviour under modifications of the original manifold. One possible way to simplify manifolds is what we will call \emph{cut-and-paste} through the rest of the paper. If a closed $(2k-1)$-dimensional manifold is split by a codimension-one closed manifold $\Sigma$ into a union $X_1\cup_\Sigma X_2$, we can often find a manifold $X_0$ with $\partial X_0=-\Sigma$ such that $X_1\cup_\Sigma X_0$ and $-X_0\cup_\Sigma X_2$ are  ``simpler'' than $X_1\cup_\Sigma X_2$. Schematically, we write this modification as
\[ X_1\cup_\Sigma X_2 \quad  \leadsto \quad X_1\cup_\Sigma X_0 \:\:\sqcup \:\: -X_0\cup_\Sigma X_2.  \]
It is then useful to compare the rho invariant of $X_1\cup_\Sigma X_2$ with the sum of the rho invariants of the two other manifolds.
Recall that, when $X_i$ is any of the three manifolds $X_0$, $X_1$ or $X_2$, the subspace
	\[
	V_{X_i}^{\alpha}=\ker(H_{k-1}(\Sigma;\mathbb {C}^n_{\alpha}) \to H_{k-1}( X_i;\mathbb {C}^n_{\alpha})).
	\]
is Lagrangian in $H_{k-1}(\Sigma;\C^n_\alpha)$ with respect to the symplectic form given essentially by the twisted intersection form (we will omit $\alpha$ from the notation when we are considering the trivial $1$-dimensional representation). In particular, we can compute their \emph{Maslov triple index}
$\tau(V^\alpha_{X_0},V_{X_1}^\alpha,V^\alpha_{X_2})$, which is an integer-valued function defined on triples of Lagrangian subspaces (see Definition \ref{defmaslov}). The cut-and-paste formula for the Atiyah-Patodi-Singer rho invariant reads as follows.
\begin{reptheorem}{theocutandpaste}
	Let $X_1$, $X_2$ and $X_0$ be compact, oriented manifolds of dimension $2k-1$ with $\partial X_1=-\partial X_0=-\partial X_2$, and let $\alpha\colon \pi_1(X_1\cup_\partial X_2)\to U(n)$ be a representation that extends to $\pi_1(X_0)$. Then, for every choice of such an extension, we have
	\[
	\rho_\alpha(X_1\cup_\Sigma X_2) = \rho_\alpha(X_1 \cup_\Sigma X_0) + \rho_\alpha(-X_0\cup_\Sigma X_2)+
	C,
	\]
	where 
	\[C=\tau(V^\alpha_{X_0},V_{X_1}^\alpha,V^\alpha_{X_2}) - n\, \tau(V_{X_0},V_{X_1},V_{X_2}),\]
	with the first Maslov triple index  performed on $H_{k-1}(\partial X_1;\C^n_\alpha)$, and the second on $H_{k-1}(\partial X_1;\C)$.
\end{reptheorem}
An analogous cut-and-paste formula for the untwisted eta invariant was proved by Bunke \cite[2.5]{bunke}. Formulas related to Theorem~\ref{theocutandpaste} were also discussed by Kirk and Lesch in connection with their gluing formulas for eta invariants for manifolds with boundary \cite[Section~8.3]{kl1}. In fact, our cut-and-paste formula can be proved using their results (see \cite[Section~2.3.4]{et}). Here, however, we give a simple proof which does not involve rho invariants with manifolds with boundary, and it is based on Wall's non-additivity for the signature instead.

The second part of the paper is made up of applications of Theorem~\ref{theocutandpaste} in the context of link theory, which help relate multivariable signatures to rho invariants. 
 We consider from now on only rho invariants of $3$-manifolds with $1$-dimensional representations of the fundamental group. As such representations factor through the first homology in a unique way, we can simply see them as representations of the first homology group. 
 We recall that a $k$-component link $L=K_1\cup \cdots \cup K_k$ is said to be $n$-\emph{colored} if it is considered together with a surjective map $c\colon \{1, \dotsc , k\}\to \{1, \dotsc , n\}$, which partitions it naturally into $n$ sublinks $L_1, \dotsc , L_n$. A component $K_i$ is then said to have color $c(i)$.
 Given an $n$-colored link $L$ in $S^3$, the Cimasoni-Florens signature is a function
\[\sigma_L\colon \T_*^n \to \Z,\]
where $\T_*^n:=(S^1\setminus \{1\})^n$.
If $n=1$, $\sigma_L$ coincides with the Levine-Tristram signature function. Now, let $X_L$ be the exterior of $L$, i.e.\ the complement of an open tubular neighborhood of $L$ in $S^3$. Then, $X_L$ is a compact, oriented $3$-manifold with boundary, whose first homology group is a free abelian group generated by the meridians of $L$. Observe that $\T_*^n$ has a natural bijection with the set of representation $H_1(X_L;\Z^n)\to U(1)$ that send the meridians of components of the same color $s$ to a same value $\omega_s\in S^1\setminus \{1\}$ (we call these \emph{colored representations}): given an element $\omega=(\omega_1, \dotsc, \omega_n)\in \T_*^n$, we have an \emph{associated} colored representation $\alpha$ defined by sending the meridian of a component $K_i$ to to $\omega_{c(i)}$. The following result, which can be seen as a generalization of \eqref{eqintroknot}, expresses the multivariable signature of $L$ as the rho invariant of a suitable closed $3$-manifold $Y_L$ which only depends on $L$.  This manifold $Y_L$ is built by gluing the link exterior $X_L$ together with a $3$-manifold with boundary obtained by plumbing punctured disks in a way that is prescribed by the linking numbers of $L$. For a precise definition of $Y_L$, see Construction \ref{construction}.
       \begin{repproposition}{extheomultisign}
	          Let $L$ be an $n$-colored link. Let $\omega\in \T_*^n$, and let $\alpha\colon H_1(X_L;\Z)\to U(1)$ be the associated colored representation. Then, $\alpha$ can be extended to a representation of $H_1(Y_L;\Z)$ and, for any choice of an extension, we have 
	          \[ \rho_\alpha(Y_L) = - \sigma_L(\omega).\]
\end{repproposition}
The manifold $Y_L$ has a very simple description in the case when $L$ is \emph{color-to-color algebraically split}, i.e.\ when the total linking number $\lk(L_s,L_t)$ is $0$ for every pair of distinct colors $s,t$. Under this assumption, we prove two formulas relating the rho invariants of the closed manifolds obtained by Dehn surgery on $L$ with the multivariable signature of $L$. 
Given a link $L=K_1\cup\cdots \cup K_k$ with a rational framing $r=(r_1, \dotsc, r_k)\in \Q^k$, let $\Lambda_r$ be the rational matrix with coefficients
\[\Lambda_{ij}=
\begin{cases} 
\lk (K_i, K_j), \quad &\text{if }i\ne j,\\
r_i, \quad &\text{if } i=j.
\end{cases}\]
As $\lambda_r$ is a symmetric matrix, we can compute its signature $\sign \Lambda_r$.
Let $S_L(r)$ be the closed $3$-manifold obtained by Dehn surgery on $L$ along the rational framing $r$. We say that a representation $\alpha\colon H_1(X_L;\L)\to U(1)$ is compatible with $r$, if it extends to $H_1(S_(r);\Z)$.
First, we focus on surgery along integral framings. 
In this context, a fairly general formula was given by Cimasoni and Florens \cite[Theorem~6.7]{cf}, extending to the multivariable setting a result of Casson and Gordon \cite[Lemma~3.1]{cassongordon2} (see also \cite[Theorem~3.6]{gilmer}). 
These formulas only consider representations with a finite image, and are written in terms of the invariant $\sigma(N,\alpha)$ of Casson and Gordon (see Section~\ref{sscassgord}). 
Rewritten in terms of the rho invariant, the result of Cimasoni and Florens reads as follows.
 \begin{reptheorem}{theocf}[Cimasoni-Florens]
	Let $L$ be a $k$-colored $k$-component link. Let $q\in \N$ a positive integer and let $n_1,\dotsc, n_k\in \{1,\dotsc , p-1\}$ be integers, each of which is coprime with $q$.  Let $\omega=(e^{2\pi in _1/q}, \dotsc , e^{2\pi i n_k/q})\in \T_*^n$, and let $\alpha \colon H_1(X_L;\Z)\to U(1)$ be the associated colored representation. Let $g$ be a compatible integral framing on $L$. Then, we have
	\[
	\rho_\alpha(S_L(g))= -\sigma_L(\omega) +\sum_{i<j}\Lambda_{ij} +  \sign \Lambda_g  -\lmfrac{2}{q^2}\sum_{i=1}^k (q-n_i)n_j \Lambda_{ij}.
	\]
\end{reptheorem}
Theorem~\ref{theocf} is stated for the signature associated to the \emph{maximal coloring}, where every distinct link components have different colors, but a formula for any coloring can be easily deduced from it. However, there are some restrictions on the values of $\omega$ that limit its use. Under the assumption of $L$ being color-to-color algebraically split, we prove the following formula with no restrictions on the values of $\omega$.
 \begin{reptheorem}{theointeger}
	Let $L$ be an $n$-colored link which is color-to-color algebraically split. Let $\omega\in \T_*^n$, and let $\alpha \colon H_1(X_L;\Z)\to U(1)$ be the associated colored representation. Let $g$ be a compatible integral framing on $L$.
	Then, we have
	\[
	\rho_\alpha(S_L(g))= -\sigma_L(\omega) +  \sign \Lambda_g -  2\sum_{s=1}^n h_s\theta_s(1-\theta_s),\]
	where, for each color $s$, the values $h_s$ and $\theta_s$ are determined by
		\[h_s:=\sum_{c(i)=c(j)=s} \Lambda_{ij}, \quad \quad \theta_s\in(0,1) \text{ such that } \omega_s=e^{2\pi i s}.\]
\end{reptheorem}
Theorem~\ref{theointeger} is proved by first describing the multivariable signature as a rho invariant using Proposition~\ref{extheomultisign}, and then applying the cut-and-paste formula (Theorem~\ref{theocutandpaste}) to modify $Y_L$ into a disjoint union of $S_L(g)$ and lens spaces $L(h_s,1)$, for which we can write the rho invariant very explicitly. The proof is completed by a careful computation of the Maslov triple index involved in the cut-and-paste formula.

Suppose now that a stronger assumption on the linking numbers is satisfied, namely that every link component $K_i$ has total linking number $0$ with all sublinks $L_s$ such that $s\ne c(i)$. We say in this case that $L$ is \emph{component-to-color algebraically split}. Then, as it is easy to verify, there is a particular integer framing $f_L$ (the \emph{Seifert framing}) which is compatible with all colored representations $H_1(X_L;\Z)\to U(1)$. An immediate consequence of Theorem~\ref{theointeger} is the following.
\begin{repcorollary}{corseifert}
	Let $L$ be an $n$-colored link which is component-to-color algebraically split. Let $\omega\in \T_*^n$, and let $\alpha \colon H_1(X_L;\Z)\to U(1)$ be the associated colored representation. Then, $\alpha$ extends to $H_1(S_L(f_L);\Z)$ and we have
	\[
	\rho_\alpha(S_L(f_L))= -\sigma_L(\omega) +  \sign \Lambda_{f_L}.
	\]
\end{repcorollary}
The analogue to Corollay \ref{corseifert} for the $1$-colored setting was used by Nagel and Powell in studying concordance properties of the Levine-Tristram signature \cite{nagelpowell}. Besides being a useful formula on its own, moreover, it is our starting point to prove a result that takes into account (non-integral) rational framings. This is expressed by the next and final result of this paper.

 \begin{reptheorem}{theorational}
	Let $L$ be an $n$-colored, $k$-component link that is component-to-color algebraically split. Let $\omega\in \T_*^n$, and let $\alpha\colon\pi_1(X_L)\to U(1)$ be the associated colored representation. Let $r$ be a compatible rational framing on $L$. Then, we have
	\[
	\rho_\alpha(S_L(r))= -\sigma_L(\omega) +  \sign \Lambda_r - \sum_{i=1}^k (\rho(L(p_i,q_i),\omega_{c(i)}) + \sgn(p_i/q_i)) ,
	\]
	where $p_i,q_i$ are coprime integers such that $r_i-f_i=p_i/q_i$ (here $f_i$ is the $i$-th coefficient of the Seifert framing).
\end{reptheorem}
Observe that any $1$-colored link is trivially component-to-color algebraically split. In particular, Theorem~\ref{theorational} gives a new formula relating rho invariants with the Levine-Tristram signature.
As for Theorem~\ref{theointeger}, lens spaces arise from the cut-and-paste construction. In this case, we do not spell out the values of their rho invariant in the statement of the theorem, as this would make the formula more cumbersome. Note however that these value can be always computed easily (see Proposition~\ref{proplens}).

\begin{remark*}
Whenever not stated otherwise, all manifolds are assumed to be smooth.
\end{remark*}

\subsubsection*{Outline of the paper}
In Section~2, we review the basics about twisted signatures and Atiyah-Patodi-Singer rho invariants. We illustrate then a formula for the rho invariant of a $3$-dimensional lens space.
In Section~3, we review the Maslov triple index of Lagrangian subspaces and Wall's non-additivity theorem for the signature. We prove then our cut-and-paste formula (Theorem~\ref{theocutandpaste}).
In section 4, we develop the applications in knot theory. We first prove the basic formula relating rho invariants and multivariable signatures (Proposition~\ref{extheomultisign}), and then use it to show results about integer (Theorem~\ref{theointeger}) and rational (Theorem~\ref{theorational}) Dehn surgery.

\subsubsection*{Acknowledgements}
This project was supported by the collaborative research center SFB 1085 ``Higher Invariants'', funded by the Deutsche Forschungsgemeinschaft. Part of the article is based on the author's PhD thesis, which was written under the support of the graduate school GRK 1692 ``Curvature, Cycles, and
Cohomology'', also funded by the Deutsche Forschungsgemeinschaft.
The author would like to thank Stefan Friedl for several interesting discussions.

\section{Twisted signatures and rho invariants}
In Section~\ref{sstwisted} we review the definition of twisted homology and twisted signatures, and set notation for these. 
In Section~\ref{ssrho}, we recall the basics about Atiyah-Patodi-Singer eta and rho invariants.
In Section~\ref{sscassgord}, we underline the relation between these invariants and an invariant of Casson and Gordon.
In Section~\ref{sslens}, we give a reinterpretation of a well-known computation for the rho invariant of $3$-dimensional lens spaces. 

\subsection{Twisted intersection forms and signatures}\label{sstwisted}
Let $M$ be a connected, compact, oriented manifold of dimension $2k$ with a representation $\alpha\colon \pi_1(M) \to U(n)$ for some $n\in \N$. 
Let $\pi:=\pi_1(M)$. We consider the twisted homology groups
\[H_i(X;\C^n_{\xi^\alpha}):= H_i(C_*(\widetilde{X})\otimes_{\Z[\pi]}\C^n ),\]
where $\widetilde{X}$ is the universal cover of $X$, so that $\pi$ acts on $C_*(\widetilde{X})$ from the right by (reversed) deck transformations (i.e.\ $(\sigma \cdot g)(x):=g^{-1}\cdot \sigma(x)$), and it acts on $\C^n$ from the left through the representation $\alpha$.
\begin{remark}
If $M=M_1\sqcup \cdots \sqcup M_N$ is a disjoint union of connected manifolds $M_j$, we will make the abuse of notation of writing $\alpha\colon \pi_1(M) \to U(n)$ to denote a collection of representations $\alpha_j\colon \pi_1(M_j)\to U(n)$. In this case, we define then
\[H_i(X;\C^n_{\xi^\alpha}):= \bigoplus_{j=1}^N H_i(X_j;\C^n_{\xi^\alpha_j}).\]
\end{remark}

In a similar way, twisted cohomology groups can be defined, and a twisted Poincar\'e duality isomorphism is satisfied.
The ordinary intersection pairing on $M$ can be generalized into a sesquilinear form
\[
I^\alpha_M \colon  H_k(M;\C^n_\alpha)\times H_{k}(M;\C^n_\alpha)\to \C, 
\]
called the \emph{twisted intersection form},
which arises from the sequence of maps
\[
H_k(M;\C^n_\alpha)\xrightarrow{j_*}H_k(M,\partial M;\C^n_\alpha) \xrightarrow{PD} H^k(M;\C^n_\alpha)\xrightarrow{\ev}\overline{\Hom (H_k(M;\C^n_\alpha),\C)},
\]
where the bar denotes complex conjugation of vector spaces
(compare e.g.\ with \cite[Definition 2.7]{cnt}).
The form $I_M^\alpha$ is not necessarily non-degenerate, but it is non-degenerate if $M$ is closed. In fact, its radical is exactly the kernel of $j_*$, as the two other maps in the composition are isomorphisms. 
 Moreover, $I_M^\alpha$ is Hermitian if $k$ is even and skew-Hermitian if $k$ is odd.
 This leads to the following definition.
 \begin{definition}
 	Let $M$ be a compact oriented manifold of dimension $2k$ with a representation $\alpha\colon \pi_1(M)\to U(n)$.
The \emph{signature of $M$ twisted by $\alpha$} is the integer
 \[\sigma_\alpha(M):=\begin{cases}\sign (I_M^\alpha) , \text{ if $k$ is even},\\
 \sign(i\,I_M^\alpha),  \text{ if $k$ is odd},
 \end{cases}\]
 where $I_M^\alpha$ denotes the intersection pairing in degree $k$.
 \end{definition}
 
%
%
%

\subsection{Basics on the rho invariant}\label{ssrho}

Let $N$ be a closed, oriented, Riemannian manifold of dimension $2k-1$, with a representation $\alpha\colon\pi_1(N)\to U(n)$. 
Let $E_\alpha\to N$ be the associated flat vector bundle, and consider the subspace
\[\Omega^{\ev}(N, E_\alpha):=\bigoplus_{q=0}^{k-1} \Omega^{2q}(N, E_\alpha)\]
of twisted differential forms of even degree. Let $D^\alpha_N$ be the \emph{twisted odd signature operator}, i.e.\ the first-order differential operator on $\Omega^{\ev}(N, E_\alpha)$ defined by
\[ D^\alpha_N \,\phi := (-1)^{q+1}i^k(\star d - d \star)\phi, \quad \text{ for  } \phi\in \Omega^{2q}(N, E_\alpha).\]
The operator $D^\alpha_N$ can be extended to a self-adjoint elliptic operator with discrete spectrum and, by the results of Atiyah, Patodi and Singer \cite{aps1}, it has a well defined \emph{eta invariant}
\[\eta_\alpha(N):=\eta( D^\alpha_N)\in \R,\]
which is defined as the value at $0$ of a meromorphic extension of the eta function
\[ \eta(s)=\sum_{\substack{ \lambda\in \spec (D^\alpha_N)\\ \lambda\ne 0} } \!\!\!\!\!\sgn \lambda\, \lvert \lambda \rvert ^{-s}.\]
We say that a compact Riemannian manifold $M$ has metric of \emph{product form} near the boundary, if there exists a neighborhood of $\partial M$ that is isometric to $(-\varepsilon, 0] \times \partial M$ with the product metric.
The main result about the eta invariant of the twisted signature operator is the following \cite[Theorem~2.2]{aps2}.
\begin{theorem}[Atiyah-Patodi-Singer] \label{theoaps} Let $M$ be a compact, oriented manifold with $\partial M=N$, equipped with Riemannian metric of product form near $N$, and let $\alpha\colon\pi_1(M)\to U(n)$ be a representation. Then
	\[	\sigma_\alpha(M)= n\int_M \!L(p)  -\eta_\alpha(N) ,  \]
	where $L(p)$ is the Hirzebruch $L$-polynomial in the Pontryagin forms of $M$.
\end{theorem}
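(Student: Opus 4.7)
The plan is to realize $\sigma_\alpha(M)$ as the Fredholm index of an elliptic operator on $M$ with Atiyah-Patodi-Singer boundary conditions, and then apply the general APS index theorem for manifolds with boundary to identify this index with the right-hand side of the claimed formula.

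First, I would construct the twisted signature operator $D^\alpha_M$ on $M$, acting on even-degree $E_\alpha$-valued forms and built from the twisted exterior derivative $d^\alpha$ (using the flat connection on $E_\alpha$) and its formal adjoint. Since the metric is of product form near $\partial M = N$ and the flat bundle is translation-invariant in the collar, $D^\alpha_M$ takes the canonical form $\sigma(\partial_u + D^\alpha_N)$ near $N$, where $\sigma$ is a bundle isomorphism and $D^\alpha_N$ is exactly the operator whose eta invariant defines $\eta_\alpha(N)$. Imposing the APS spectral boundary condition $P_{>0}$ (projection onto the span of eigenvectors of $D^\alpha_N$ with positive eigenvalue) makes $D^\alpha_M$ Fredholm, and the APS index theorem gives
\[\operatorname{ind}(D^\alpha_M, P_{>0}) = \int_M a(D^\alpha_M) - \frac{\eta_\alpha(N) + h_\alpha}{2},\]
where $h_\alpha := \dim \ker D^\alpha_N$ and $a(D^\alpha_M)$ is the Atiyah-Singer index density. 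Because $E_\alpha$ is flat, its Chern character reduces to its rank, contributing no higher characteristic forms: a local computation then shows $a(D^\alpha_M) = n\, L(p)$, with $L(p)$ the Hirzebruch polynomial in the Pontryagin forms of $TM$.

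The remaining task is the Hodge-theoretic identification of this index with the twisted signature. Using Hodge theory on the elongation of $M$ by an infinite cylinder $(-\infty, 0] \times N$, extended $L^2$-harmonic representatives of the image $\operatorname{im}\bigl(H^k(M, N; \C^n_\alpha) \to H^k(M; \C^n_\alpha)\bigr)$ can be split according to the $\pm 1$-eigenspaces of the natural involution $\tau$ on middle-degree forms, and the signature $\sigma_\alpha(M)$ equals the difference of dimensions of these $\tau$-eigenspaces. Comparing with the analytic computation of the Fredholm index (in which the extended $L^2$ solutions coming from $\ker D^\alpha_N$ contribute a half-integer $h_\alpha/2$ correction), one obtains the identity $\sigma_\alpha(M) = \operatorname{ind}(D^\alpha_M, P_{>0}) + h_\alpha/2$. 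Substituting into the index formula absorbs the $h_\alpha$ term and yields $\sigma_\alpha(M) = n \int_M L(p) - \eta_\alpha(N)$.

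The main obstacle is the last step: carefully tracking how the finite-dimensional kernel of the boundary operator contributes to both the signature (through extended $L^2$ harmonic forms that are not genuinely $L^2$) and the analytic index, and verifying that these contributions precisely cancel to give the clean formula without an $h_\alpha/2$ correction. This is the analytic core of the APS program, and the identification relies on the twisted analogue of Poincaré-Lefschetz duality to ensure the intersection form is non-degenerate on the relevant subspace with signature matching the index.
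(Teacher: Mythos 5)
The paper does not prove this statement: it is quoted verbatim (as Theorem~\ref{theoaps}) from Atiyah--Patodi--Singer's second paper, \cite[Theorem~2.2]{aps2}, and is used as a black box throughout. Your outline is, in substance, a reconstruction of the original APS argument for that cited theorem: signature operator with spectral boundary condition, index theorem with the $\tfrac{1}{2}(\eta+h)$ boundary correction, flatness of $E_\alpha$ reducing the index density to $n\,L(p)$, and the Hodge-theoretic identification of the index with the signature via extended $L^2$ solutions on the cylindrical elongation. So there is no divergence of method to report; the comparison is with \cite{aps2} rather than with anything in this paper.

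Two points of bookkeeping in your sketch deserve care, and both sit exactly at the step you yourself flag as the analytic core. First, the twisted signature operator on the even-dimensional manifold $M$ does not act on even-degree forms; it is $d+d^*$ restricted to the $+1$-eigenspace of the involution $\tau=i^{p(p-1)+k}\star$ on all $E_\alpha$-valued forms, mapping to the $-1$-eigenspace. It is only the \emph{tangential} operator on $N$ that is naturally identified with the odd signature operator $D^\alpha_N$ on even-degree forms of $N$ (the paper's operator from Section~\ref{ssrho}) --- but identified after the full tangential operator, which acts on $\Omega^{*}(N,E_\alpha)\cong \Omega^{\mathrm{ev}}\oplus\Omega^{\mathrm{odd}}$, is split into two isospectral pieces. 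Consequently the $h$ and $\eta$ entering the index formula for the signature operator are \emph{twice} $\dim\ker D^\alpha_N$ and twice $\eta_\alpha(N)$; with your normalization $h_\alpha=\dim\ker D^\alpha_N$ the boundary correction is $-(\eta_\alpha(N)+h_\alpha)$, not $-\tfrac{1}{2}(\eta_\alpha(N)+h_\alpha)$. If one runs your final substitution with the halved correction, the output is $n\int_M L(p)-\tfrac{1}{2}\eta_\alpha(N)$, which is off by a factor of $2$ in the eta term. With the doubling accounted for, the correction $\sigma_\alpha(M)=\operatorname{ind}+h_\alpha$ (coming from the extended solutions, whose limiting values sweep out a Lagrangian-type subspace of $\ker$ of the full tangential operator) cancels the $-h_\alpha$ and leaves exactly $-\eta_\alpha(N)$. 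This is precisely the cancellation you describe qualitatively, so the gap is one of precision rather than of strategy, but it is the step on which the clean form of the theorem depends.
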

Note that both summands on the right-hand term depend on the Riemannian metric on $N$.
We shall not dwell upon the geometrical significance of the integral of the $L$-polynomial, as it is going to get simplified soon.
\begin{remark}
	The restriction of $\alpha\colon \pi_1(M) \to U(n)$ to a representation of $\pi_1(N)$ is made by composing $\alpha$ with the natural map $\pi_1(N)\to \pi_1(M)$. 
	If $N$ is not connected, a map $\pi_1(N_1)\to \pi_1(M)$ for each connected component $N_i$ of $N$ can be obtained by choosing appropriate paths between base points. 
The restriction $\alpha\colon \pi_1(N)\to U(n)$ must be interpreted as the collection of the representations $\pi_1(N_i)\to U(n)$ for all connected components $N_i$. The invariant $\eta_\alpha(N)$ is defined in this case as the sum of the eta invariants of the $N_i$'s.
\end{remark}
We are now going to define the rho invariant. Note that, for the untwisted odd signature operator $D_N$ on $\Omega^{\ev}(N, \C)$, we set $\eta(N):=\eta(D_N)$.
\begin{definition}
	Let $N$ be a closed, oriented manifold of odd dimension, and let $\alpha\colon\pi_1(N)\to U(n)$ be a representation. The \emph{Atiyah-Patodi-Singer rho invariant} of $N$ associated to $\alpha$ is the real number
	\[\rho_\alpha(N):=\eta_\alpha(N)-n\,\eta(N),\]
	where the eta invariants are computed for an arbitrary Riemannian metric on $N$.
\end{definition}

We shall see in a moment that the difference $\eta_\alpha(N)-n\,\eta(N)$ is independent of the Riemannian metric, so that $\rho_\alpha(N)$ is well defined.   Moreover, $\rho_\tau(N)$ is $0$ for trivial representations $\tau$, and it satisfies
\begin{equation}\label{eqapsrhoorientation}
\rho_\alpha(-N)=-\rho_\alpha(N).
\end{equation}

The main theorem about the rho invariant is the following \cite[Theorem~2.4]{aps2}, which is an easy consequence of Theorem~\ref{theoaps}.
\begin{theorem}[Atiyah-Patodi-Singer]~ \label{theoapsrho}
\begin{enumerate}[(i)]
		\item $\rho_\alpha(N)$ is independent of the Riemannian metric on $N$. 
	\item If $M$ is a compact, oriented manifold with $\partial M=N$ and $\alpha$ extends to $M$, then
	\[\rho_\alpha(N)= n\,\sigma(M)-\sigma_\alpha(M). \]
\end{enumerate}
\end{theorem}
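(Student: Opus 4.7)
The plan is to derive both statements from Theorem~\ref{theoaps} applied simultaneously to the twisted and trivial representations, exploiting the fact that the Hirzebruch $L$-polynomial depends only on the Riemannian geometry of $M$ and not on $\alpha$.

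First, suppose $M$ is a compact oriented manifold of dimension $2k$ with $\partial M = N$ and a metric of product form near $N$, and assume $\alpha$ extends to $\pi_1(M)$. Apply Theorem~\ref{theoaps} to this $M$ twice: once with the chosen $U(n)$-extension of $\alpha$, and once with the trivial $1$-dimensional representation $\tau$ (whose extension to $\pi_1(M)$ is automatic). This yields
\[
\sigma_\alpha(M) = n\int_M L(p) - \eta_\alpha(N), \qquad \sigma(M) = \int_M L(p) - \eta(N).
\]
Multiplying the second equation by $n$ and subtracting from the first, the $L$-polynomial integrals cancel and one gets
\begin{equation}\label{eqplan}
n\,\sigma(M) - \sigma_\alpha(M) = \eta_\alpha(N) - n\,\eta(N).
\end{equation}
This is already the formula claimed in (ii), provided that the right-hand side is independent of the metric chosen on $N$; so (ii) will follow once (i) is established.

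To prove (i), I would let $g_0$ and $g_1$ be two Riemannian metrics on $N$ and consider the cylinder $M := N\times [0,1]$, equipped with any smooth metric that agrees with $g_0\oplus dt^2$ on a collar of $N\times\{0\}$ and with $g_1\oplus dt^2$ on a collar of $N\times\{1\}$. The representation $\alpha$ extends to $\pi_1(M)$ through the deformation retraction $M\to N$, and $\partial M = -N \sqcup N$ (with metrics $g_0$ and $g_1$ respectively). The key observation is that $\sigma(M) = \sigma_\alpha(M) = 0$: indeed, in the long exact sequence of the pair $(M,\partial M)$, the boundary inclusion $H_k(\partial M) \to H_k(M)$ is surjective (since both $H_k(M)$ and each copy of $H_k(N)$ in $H_k(\partial M)$ are identified via the projection), forcing the map $H_k(M) \to H_k(M,\partial M)$ to vanish, which kills the intersection form; the same argument works with coefficients in $\C^n_\alpha$ since $M$ retracts onto $N$. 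Plugging this into \eqref{eqplan} applied to this cylinder gives
\[
0 = \bigl(\eta_\alpha(N,g_1) - n\,\eta(N,g_1)\bigr) - \bigl(\eta_\alpha(N,g_0) - n\,\eta(N,g_0)\bigr),
\]
after using $\eta_\alpha(-N,g_0) = -\eta_\alpha(N,g_0)$ and the analogous identity for $\eta$. This is precisely $\rho_\alpha(N,g_1) = \rho_\alpha(N,g_0)$, proving (i). Combining with \eqref{eqplan} and the fact that any metric on $N$ can be realized as the boundary metric of a product-form metric on a given $M$, part (ii) follows as well.

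The only step requiring any care is the vanishing of $\sigma(M)$ and $\sigma_\alpha(M)$ for the cylinder; the rest is a direct algebraic manipulation of the Atiyah--Patodi--Singer index formula. In particular, this proof does not require any independent verification that $\int_M L(p)$ is metric-independent: that fact is a consequence of the derivation rather than an input.
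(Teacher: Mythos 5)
Your proof is correct and is exactly the standard derivation that the paper only gestures at: the paper gives no proof of Theorem \ref{theoapsrho}, citing \cite[Theorem~2.4]{aps2} and noting it is "an easy consequence of Theorem~\ref{theoaps}", which is precisely the subtraction argument plus the product-cobordism trick you carry out (your vanishing of $\sigma(M)$ and $\sigma_\alpha(M)$ on the cylinder also matches the reflection argument the paper uses later in the proof of Theorem~\ref{theocutandpaste}). The only blemish is the closing aside: for a manifold with boundary $\int_M L(p)$ is genuinely metric-dependent, so it is not "a consequence of the derivation" that it is metric-independent — but this remark plays no role in the argument.
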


We state one more result that will turn useful later on. 
\begin{proposition}\label{proprhoproduct}
	Let $\Sigma$ be a closed, oriented surface, and let $\psi \colon\pi_1(\Sigma\times S^1)\to U(1)$ be a representation. Then
$\rho_\psi(\Sigma\times S^1)=0.$
\end{proposition}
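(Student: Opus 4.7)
The plan is to combine Theorem~\ref{theoapsrho} with an orientation-reversing symmetry and a continuity argument. Since any $U(1)$-valued representation factors through $H_1$, I write $\psi = (\chi, \omega)$, where $\chi := \psi|_{\pi_1(\Sigma)}$ and $\omega := \psi([\mathrm{pt}] \times S^1) \in U(1)$.

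First, consider the orientation-reversing diffeomorphism $\phi \colon \Sigma \times S^1 \to \Sigma \times S^1$ given by $\phi(x, z) := (x, \bar z)$. It induces $(a, n) \mapsto (a, -n)$ on $\pi_1$, so $\phi^*\psi = (\chi, \omega^{-1})$. At the special values $\omega \in \{1, -1\}$ one has $\phi^*\psi = \psi$; combining this with \eqref{eqapsrhoorientation} and the diffeomorphism invariance of the rho invariant yields $\rho_\psi(\Sigma \times S^1) = -\rho_\psi(\Sigma \times S^1)$, hence $\rho_\psi(\Sigma \times S^1) = 0$ whenever $\omega = \pm 1$. In particular, this settles the case $\omega = 1$.

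For a general $\omega \ne 1$ I would argue by continuity. By Künneth for local systems, $H^*(\Sigma \times S^1; \C_\psi) \cong H^*(\Sigma; \C_\chi) \otimes H^*(S^1; \C_\omega)$, and the second factor vanishes for $\omega \ne 1$; by Hodge theory this forces $\ker D^\psi_{\Sigma \times S^1} = 0$ on the whole open locus $\{\omega \ne 1\}$ of the character variety. Standard perturbation theory for the self-adjoint elliptic family $\{D^\psi_N\}$ then makes $\rho_\psi$ continuous in $\psi$ on this locus. Independently, the bordism vanishing $\Omega^{SO}_3(BU(1)) = H_3(\C P^\infty;\Z) = 0$ implies that every pair $(N, \psi)$ bounds a compact $4$-manifold to which $\psi$ extends, so Theorem~\ref{theoapsrho}(ii) forces $\rho_\psi(N) \in \Z$ for every $U(1)$ representation. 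A continuous integer-valued function on the connected set $\Hom(H_1(\Sigma), U(1)) \times (U(1)\setminus\{1\})$ must be constant, and the constant value is $\rho_{(\chi,-1)}(\Sigma \times S^1) = 0$ by the previous paragraph, completing the proof. The main technical point to verify carefully is the continuity statement, which relies on perturbation theory for the family of twisted odd signature operators away from kernel jumps combined with the bordism vanishing above.
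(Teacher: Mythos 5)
Your reduction to the two sub-cases is sensible, and two of the three ingredients are sound: the orientation-reversing diffeomorphism $(x,z)\mapsto(x,\bar z)$ combined with \eqref{eqapsrhoorientation} correctly gives $\rho_{(\chi,\omega)}(\Sigma\times S^1)=-\rho_{(\chi,\omega^{-1})}(\Sigma\times S^1)$, hence vanishing for $\omega=\pm1$; and the continuity of $\rho_\psi$ on the locus $\{\omega\ne 1\}$ is legitimate (the K\"unneth formula kills the twisted cohomology there, so $\ker D^\psi_{\Sigma\times S^1}=0$, no eigenvalue can cross zero, and the eta invariant varies continuously). For comparison, the paper does not argue this way at all: it simply defers to \cite[Lemma~4.2]{cnt}, so a self-contained argument would be genuinely valuable if it closed.

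The gap is the integrality step, and it is fatal. A representation $\pi_1(N)\to U(1)$ is classified by a map to $K(U(1)^\delta,1)$, where $U(1)^\delta$ is $U(1)$ with the discrete topology, not by a map to $BU(1)\simeq \C P^\infty=K(\Z,2)$; the vanishing of $\Omega_3^{SO}(\C P^\infty)$ would let you extend a complex line bundle over a null-bordism, not a flat structure, so it says nothing about extending $\psi$. The relevant group $\Omega_3^{SO}(K(U(1)^\delta,1))$ is far from zero --- this is exactly why Casson and Gordon must pass to $r$ parallel copies of $N$ in Section~\ref{sscassgord}. Decisively, the conclusion ``$\rho_\psi(N)\in\Z$ for every $U(1)$-representation'' is false: by the paper's own Corollary~\ref{corlensinteger}, $\rho(L(3,1),e^{2\pi i/3})=2\cdot 3\cdot\frac{1}{3}\cdot\frac{2}{3}-1=\frac{1}{3}$. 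Without integrality (or at least local constancy) the ``continuous function on a connected set'' argument cannot conclude. To rescue your strategy you would have to prove directly that $\rho_\psi(\Sigma\times S^1)$ is locally constant in $\psi$ on $\{\omega\ne1\}$, i.e.\ that the local variation term of the eta invariant along a path of flat connections vanishes for this particular manifold; that is a genuine computation (it is precisely the term that makes lens-space rho invariants vary on the character variety of $\Z/p$) and is not supplied by the bordism statement you invoke. As written, your proof establishes the proposition only for $\omega\in\{1,-1\}$, and more generally for those $\psi$ fixed by some orientation-reversing self-diffeomorphism of $\Sigma\times S^1$.
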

\begin{proof}
See \cite[Lemma~4.2]{cnt}.
\end{proof}

\subsection{Rho invariants and Casson-Gordon invariants}\label{sscassgord}
We will now review the definition of an invariant of Casson and Gordon, and relate it to the Atiyah-Patodi-Singer rho invariant.
Let $N$ be a closed, oriented $3$-manifold, and let $\alpha\colon H_1(N;\Z)\to U(1)$ be a representation. Assume that the image of $\alpha$ is finite. 
Using a bordism argument, Casson and Gordon observe that there exists a compact, oriented $4$-manifold $W$ such that the boundary of $W$ is the disjoint union of $r$ copies of $N$ for some $r\in \N$ (we will write $\partial W =r N$) with a representation $\alpha'\colon H_1(W;\Z)\to U(1)$ that restrict to $\alpha$ on each boundary component. They define then an invariant as 
\begin{equation}\label{eqsigma}
\sigma(N,\alpha):=\lmfrac{1}{r}(\sigma_{\alpha'}(W)-\sigma(W)).
\end{equation}
By additivity of the signature and again some bordism theory, they show that the invariant $\sigma(N,\alpha)$ is independent of the choice of $W$ and of the extension $\alpha'$ (see also \cite[Corollary~2.11]{cnt} for a more detailed version of their proof).
Using the Atiyah-Patodi-Singer index theorem, their invariant can be immediately reinterpreted as an Atiyah-Patodi-Singer rho invariant. We state this explicitly for further reference, albeit it is surely known to the experts.
\begin{proposition}\label{propcassgord}
Let $N$ be a closed, oriented $3$-manifold, and let $\alpha\colon H_1(N;\Z)\to U(1)$ be a representation with finite image. Then, we have
\[\sigma(N,\alpha)=- \rho_\alpha(N).\]
\end{proposition}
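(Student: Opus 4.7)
The plan is to reduce the statement to a direct application of the Atiyah--Patodi--Singer formula for the rho invariant (Theorem~\ref{theoapsrho}(ii)), using the very $4$-manifold $W$ appearing in the definition of $\sigma(N,\alpha)$. The key observation is that $\sigma(N,\alpha)$ is \emph{defined} precisely so as to capture the signature defect over a bounding manifold, while Theorem~\ref{theoapsrho}(ii) expresses the same defect in terms of the rho invariant.

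Concretely, I would start by taking a compact oriented $4$-manifold $W$ with $\partial W = r\, N$ (disjoint union of $r$ copies of $N$) and an extension $\alpha'\colon H_1(W;\Z)\to U(1)$ of $\alpha$ on each boundary component, whose existence is guaranteed by the bordism argument of Casson and Gordon (using that $\alpha$ has finite image). Since $n=1$ and $\alpha'$ extends $\alpha$ on every boundary copy, Theorem~\ref{theoapsrho}(ii) applied to $W$ yields
\[
\rho_{\alpha'}(\partial W) = \sigma(W) - \sigma_{\alpha'}(W).
\]
The rho invariant is additive over disjoint unions (as remarked in Section~\ref{ssrho}, where $\eta_\alpha$ on a disconnected manifold is defined as a sum), so $\rho_{\alpha'}(\partial W) = r\,\rho_\alpha(N)$.

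Combining these two identities gives
\[
r\,\rho_\alpha(N) \;=\; \sigma(W) - \sigma_{\alpha'}(W) \;=\; -\bigl(\sigma_{\alpha'}(W) - \sigma(W)\bigr) \;=\; -r\,\sigma(N,\alpha),
\]
by the definition \eqref{eqsigma}. Dividing by $r$ produces the claimed equality $\sigma(N,\alpha) = -\rho_\alpha(N)$.

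There is essentially no obstacle beyond checking that the conventions match: one must verify that the sign in \eqref{eqsigma} combines correctly with the sign in Theorem~\ref{theoapsrho}(ii), and that the additivity of $\rho_{\alpha'}$ over the $r$ boundary components of $W$ indeed produces $r\,\rho_\alpha(N)$ (as opposed to, say, $r$ possibly different values, which is ruled out since $\alpha'$ restricts to $\alpha$ on each component). Once these bookkeeping points are in place, the proof is immediate and independent of the choice of $W$ and $\alpha'$ (the independence itself being a restatement of the well-definedness of $\sigma(N,\alpha)$ already established by Casson and Gordon).
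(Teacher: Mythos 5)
Your proof is correct and is essentially identical to the paper's own argument: both apply Theorem~\ref{theoapsrho}(ii) to the bounding $4$-manifold $W$ with $\partial W = rN$ from the Casson--Gordon construction, use additivity of the rho invariant over the $r$ boundary components, and compare with the defining formula \eqref{eqsigma}. No further comment is needed.
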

\begin{proof}
Let $W$ be a compact, oriented $4$-manifold with $\partial W = rN$, with a representation $\alpha'\colon H_1(W;\Z)\to U(1)$ that restricts to $\alpha$ as discussed above, so that $\sigma(N,\alpha)$ is described by \eqref{eqsigma}. 
Using Theorem~\ref{theoapsrho}, on the other hand, we have
\[r\rho_{\alpha'}(N)=\rho_{\alpha'}(rN)=\sigma(W)-\sigma_\alpha(W).\] 
Comparing this with \eqref{eqsigma}, we obtain the desired statement.
\end{proof}

\subsection{The rho invariant of lens spaces}\label{sslens}
Given coprime integers $p$ and $q$, the $3$-dimensional lens space $L(p,q)$ can be built as the union of two solid tori $Y_1,Y_2$ along any orientation-reversing diffeomorphism $f\colon \partial V_2\to \partial V_1$ such that 
\[f_*(\mu_2)=  - q \mu_1 + p\lambda_1,\]
where $m_1, m_2$ are the meridians respectively of $Y_1, Y_2$, and $l_1$ is a longitude of $Y_1$.
This construction is well defined for both positive and negative values of $p$ and $q$. For positive $p$ it coincides, up to some explicit orientation-preserving diffeomorphism, with the classical definition of $L(p,q)$ as a quotient of $S^3$ (compare with \cite[Lemma~91.3]{friedlat}). 
In general, there are orientation preserving diffeomorphisms
\[L(-p,q)\cong L(p,-q) \cong - L(p,q).\]
In particular, for both positive and negative $p$ we have an identification of
$\pi_1(L(p,q))$ with $\Z/p$, and under this identification the element $[1]\in \Z/p$ coincides with the image of the generator of $\pi_1(Y_1)$ under the gluing.

Rho invariants of $3$-dimensional lens spaces can be computed explicitly.
As every representation $\alpha\colon \Z/p\to U(n)$ can be written as a direct sum of $1$-dimensional representations, we shall focus on $1$-dimensional representations.
Moreover, we shall exclude the case $p=0$, as $L(0,1)$ is diffeomorphic to $S^2\times S^1$ and its rho invariant is $0$ for any choice of $\alpha$.
We observe in this case that the representations $\alpha\colon \Z/p\to U(1)$
are in a natural bijection with the set of $\abs{p}$\textsuperscript{th} roots of unity: to each such root $\omega$, we associate the representation $\alpha_\omega$ sending $1$ to $\omega$. 
\begin{notation}Given a $\abs{p}$\textsuperscript{th} root of unity $\omega$, we write 
	\[\rho(L(p,q),\omega):= \rho_{\alpha_\omega}(L(p,q)).\]
\end{notation}

Formulas for the rho invariants of lens spaces were given since the original paper of Atiyah, Patodi and Singer \cite[Proposition~2.12]{aps2}. Introducing the periodic sawtooth function $\saw{\cdot}\colon \R\to (-\frac{1}{2}, \frac{1}{2})$ defined by
\[ \saw{x}:= \begin{cases} x- \lfloor x\rfloor - \lmfrac{1}{2}, &\text{ if }x\in \R\setminus \Z,\\
0,    &\text{ if }x\in \Z.\end{cases},\] we give the following description.
\begin{proposition}\label{proplens}
Let $p,q$ be two coprime integers with $p\ne 0$, and let $\zeta=e^{2\pi i /p}$. Then, for $k\in \{0,1,\dotsc, \abs{p}-1\}$, we have
	\[\rho(L(p,q),\zeta^{kq})=-4\sum_{j=1}^{k-1} \saww{\lmfrac{qj}{p}} - 2\saww{\lmfrac{qk}{p}}.
\]
\end{proposition}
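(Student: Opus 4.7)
The plan is to reduce this to the classical cotangent-sum formula for the rho invariant of a lens space, going back to Atiyah--Patodi--Singer, and then to translate it into sawtooth form via a finite Fourier identity.

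The first step is to realize $L(p,q)$ as the quotient $S^3/(\Z/p)$, where the generator acts linearly on $S^3\subset \C^2$ by $(z_1,z_2)\mapsto (\zeta z_1, \zeta^q z_2)$. The $4$-disk $D^4$ extends this action with a single fixed point at the origin, so combining Theorem~\ref{theoaps} with the Atiyah--Bott $G$-signature theorem for the orbifold quotient $D^4/(\Z/p)$ (whose boundary is $L(p,q)$) yields the cotangent formula
\[
\rho(L(p,q),\zeta^m)\;=\;-\lmfrac{1}{p}\sum_{j=1}^{\abs{p}-1}\cot\Big(\lmfrac{\pi j}{p}\Big)\cot\Big(\lmfrac{\pi j q}{p}\Big)\big(\zeta^{mj}+\zeta^{-mj}-2\big),
\]
with the case $p<0$ following from $\rho(-L(p,q),\omega)=-\rho(L(p,q),\omega)$. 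Substituting $m=kq$ turns the weight $\zeta^{mj}+\zeta^{-mj}-2$ into $-4\sin^2(\pi jkq/p)$.

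The second step is to invoke the finite Fourier identity
\[
\saww{\lmfrac{n}{p}}\;=\;-\lmfrac{1}{2p}\sum_{j=1}^{\abs{p}-1}\cot\Big(\lmfrac{\pi j}{p}\Big)\sin\Big(\lmfrac{2\pi jn}{p}\Big),
\]
valid for every integer $n$. Expanding $-4\sin^2(\pi jkq/p) = -2+2\cos(2\pi jkq/p)$ and using the product-to-sum identity $2\cos A\sin B = \sin(A+B)-\sin(A-B)$, one telescopes the resulting double sum and matches it against the Fourier expansion of the target expression $-4\sum_{l=1}^{k-1}\saw{ql/p}-2\saw{qk/p}$.

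The main obstacle is the bookkeeping in this last step. Specifically, one must verify that the interior sawtooth values $\saw{ql/p}$, $1\le l\le k-1$, pick up coefficient $-4$ because each appears twice in the telescoping (once from the $\zeta^{mj}$ term and once from $\zeta^{-mj}$), while the endpoint $\saw{qk/p}$ appears only once and hence with coefficient $-2$. Useful sanity checks are $k=0$ (trivial representation, empty sum, $\rho=0$) and $k=1$ (only the endpoint term survives, matching $\rho=-2\saw{q/p}$).
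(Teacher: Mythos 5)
Your route is genuinely different from the paper's: the paper starts from Casson--Gordon's lattice-point formula, rewriting $\rho(L(p,q),\zeta^{kq})$ as $4\big(\integers\Delta(k,\lmfrac{kq}{p})-\area\Delta(k,\lmfrac{kq}{p})\big)$ and reducing everything to floor functions, whereas you go through the equivariant signature defect and finite Fourier analysis. The strategy is viable: the finite Fourier identity you quote for $\saw{n/p}$ is correct, and so is the telescoping structure you describe (interior terms doubled, endpoint counted once) --- with $\beta=2\pi jq/p$ the per-$j$ identity $-\cot(\beta/2)\,(2\cos(k\beta)-2)=4\sum_{l=1}^{k-1}\sin(l\beta)+2\sin(k\beta)$ does hold.

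However, the cotangent formula you start from carries the wrong constant, and this is a genuine error rather than bookkeeping. The Atiyah--Patodi--Singer defect formula reads
\[
\rho(L(p,q),\zeta^m)=-\lmfrac{1}{p}\sum_{j=1}^{\abs{p}-1}\big(\zeta^{mj}-1\big)\cot\Big(\lmfrac{\pi j}{p}\Big)\cot\Big(\lmfrac{\pi jq}{p}\Big),
\]
and symmetrizing under $j\mapsto p-j$ replaces $\zeta^{mj}-1$ by $\tfrac12(\zeta^{mj}+\zeta^{-mj}-2)$; so in your symmetrized form the prefactor must be $1/(2p)$, not $1/p$. As written, your formula gives $\rho(L(3,1),e^{2\pi i/3})=2/3$, whereas the correct value is $1/3$ (check against Corollary~\ref{corlensinteger} with $\theta=1/3$, or against the triangle count $4(\tfrac14-\tfrac16)$). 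Your own proposed $k=1$ sanity check would have exposed this: the matching step fails by a uniform factor of $2$. With the constant corrected, the rest of the argument (including the reduction of $p<0$ to $p>0$ by orientation reversal, which parallels the paper's use of the oddness of the sawtooth) goes through.
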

\begin{proof}
We first suppose $p,q>0$. Set $z:=e^{2\pi i n/p}$, with $n=\gcd(p,k)$, and set $r:=k/n$. Then, we have $\zeta^{kq}=z^{rq}$.
The representation sending $1$ to $\omega^q$ has as its image the set of $m$\textsuperscript{th} roots of unity, with $m$ such that $p=mn$. In this setting, Casson and Gordon \cite[pp.187-188]{cassongordon} gave a formula for the invariant $\sigma(L(p,q),\alpha_\omega)$ that using Proposition~\ref{propcassgord} we can rewrite as
\begin{equation}\label{eqprooflattice}
\rho(L(p,q),  z^{rq})= -4\Big(\area\Delta\Big(nr,\lmfrac{rq}{m}\Big)- \integers\Delta\Big(nr,\lmfrac{rq}{m}\Big)\Big)
,
\end{equation}
where
$\Delta(x,y)$ is the triangle with vertices $(0,0)$, $(x,0)$ and $(x,y)$ and the number $\integers(\Delta(x,y))$ is given by counting:
\begin{itemize}
\item $+1$ for every point of $\Z^2$ that lies in the interior of $\Delta(x,y)$;
\item $+1/2$ for every point of $\Z^2$ that lies in the interior of its edges;
\item $+1/4$ for every point of $\Z^2\setminus \{(0,0)\}$ that coincides with one of the vertices.
\end{itemize}
We observe that \eqref{eqprooflattice} can be immediately rewritten as 
\begin{equation}\label{eqprooflattice2}
\rho(L(p,q),\zeta^{kq})=4\Big(\integers\Delta\Big(k,\lmfrac{kq}{p}\Big)-\area\Delta\Big(k,\lmfrac{kq}{p}\Big)\Big).
\end{equation}
We will now express the right-hand term of \eqref{eqprooflattice2} in a more explicit way. First of all, it is clear that $4\area\Delta\Big(k,\lmfrac{kq}{p}\Big)= \lmfrac{2q}{p}k^2. $
Moreover, we can count the lattice points inside the triangle by following vertical lines $\{(x,y)\,\vert\,x=j\}$, for $j=1,\dotsc , k$, and then summing over $j$. We obtain
\[\begin{split}
4\integers\Delta\Big(k,\lmfrac{kq}{p}\Big)&= 4\sum_{j=1}^{k-1}\Big(\lmfrac{1}{2}+\flooor{\lmfrac{jq}{p}}\Big) +4\Big(\lmfrac{1}{4}+\lmfrac{1}{2}\flooor {\lmfrac{kq}{p}}\Big)=\\
&=2k -1 +4\sum_{j=1}^{k-1} \flooor{\lmfrac{jq}{p}} +2\flooor{\lmfrac{kq}{p}}.
\end{split} \]
Taking the difference, we obtain thus
\begin{equation}\label{eqlensspacesold}
\rho(L(p,q),\zeta^{kq})=-\lmfrac{2q}{p}k^2 +2k -1 +4\sum_{j=1}^{k-1}\flooor{ \lmfrac{jq}{p}} +2\flooor{\lmfrac{kq}{p}}. 
\end{equation}
Expanding the expression in the statement, it is now immediate to see that it coincides with \eqref{eqlensspacesold}.
As the sawtooth function $\saw{\cdot}$ is odd, we see that both sides of the identity change sign when either $p$ or $q$ is changed of sign. As a consequence, the result keeps holding for non-positive choices of $p$ and $q$.
\end{proof}

\begin{corollary} \label{corlensinteger}
Let $n$ be any integer, and let $\omega\in U(1)$ be an $n$\textsuperscript{th} root of unity. Then, we have 
	\[
\rho(L(n,1),\omega)= 2n\theta(1-\theta) -\sgn (n),
\]	
where $\theta\in [0,1)$ is such that $\omega=e^{2\pi i\theta}$.
\end{corollary}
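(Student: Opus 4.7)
The approach is simply to specialize Proposition~\ref{proplens} to $q=1$ and evaluate the sawtooth expression explicitly, then handle negative $n$ by orientation reversal.

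First I would assume $n>0$. Every $n$\textsuperscript{th} root of unity is of the form $\omega=\zeta^k$ with $\zeta=e^{2\pi i/n}$ and $k\in\{0,1,\dotsc,n-1\}$, so $\theta=k/n$. Substituting $p=n$, $q=1$ into Proposition~\ref{proplens} gives
\[
\rho(L(n,1),\zeta^k)=-4\sum_{j=1}^{k-1}\saw{j/n}-2\saw{k/n}.
\]
The key simplification is that each argument $j/n$ (and $k/n$) lies in $(0,1)$, hence by definition of the periodic sawtooth function, $\saw{j/n}=j/n-1/2$ for $1\le j\le k<n$. Plugging in,
\[
\rho(L(n,1),\zeta^k)=-4\sum_{j=1}^{k-1}\Big(\frac{j}{n}-\frac{1}{2}\Big)-2\Big(\frac{k}{n}-\frac{1}{2}\Big),
\]
and the telescoping arithmetic sum collapses to $-2k^2/n+2k-1$. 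Rewriting in terms of $\theta=k/n$ yields $2n\theta(1-\theta)-1$, which matches the stated formula since $\sgn(n)=1$.

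For $n<0$, I would invoke the identification $L(n,1)\cong -L(-n,1)$ recorded in the preamble to Proposition~\ref{proplens}, together with the orientation behaviour $\rho_\alpha(-N)=-\rho_\alpha(N)$ from \eqref{eqapsrhoorientation}. Under this diffeomorphism the representation corresponding to $\omega$ is preserved (up to the natural identification of $\pi_1$ with $\Z/|n|$), so
\[
\rho(L(n,1),\omega)=-\rho(L(-n,1),\omega)=-\bigl(2(-n)\theta(1-\theta)-1\bigr)=2n\theta(1-\theta)+1,
\]
which is $2n\theta(1-\theta)-\sgn(n)$ since $\sgn(n)=-1$.

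There is no real obstacle here: the statement is essentially a bookkeeping consequence of Proposition~\ref{proplens}. The only thing requiring minimal care is the arithmetic simplification of the sawtooth sum on the nose (and making sure the case $k=n-1$, where $\theta$ is closest to $1$, is handled by the same formula, which it is since we stay inside $(0,1)$). The orientation-reversal step then delivers the result for all nonzero $n$ uniformly.
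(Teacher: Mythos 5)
Your proof is correct and follows essentially the same route as the paper: specialize the lens space formula to $q=1$, simplify the resulting sum to $-2k^2/n+2k-1$, and handle $n<0$ via $L(n,1)\cong -L(-n,1)$ and $\rho_\alpha(-N)=-\rho_\alpha(N)$; the only cosmetic difference is that the paper substitutes into the intermediate floor-function identity \eqref{eqlensspacesold} from the proof of Proposition~\ref{proplens} rather than into the sawtooth form of its statement. The paper also records the trivial case $n=0$ (where $L(0,1)=S^2\times S^1$ forces $\rho=0$), which your write-up omits but which costs nothing to add.
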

\begin{proof}
	For $n=0$, we have $\rho(L(n,1),\omega)=0$ for all $\omega\in U(1)$ because $L(0,1)=S^2\times S^1$, so that the result is trivially satisfied in this case.
	For $n>0$, there has to be a $k\in\{0,1, \cdots n-1\}$ such that $\theta=k/n$, with $k\in\{0,1, \cdots n-1\}$. From  \eqref{eqlensspacesold} we easisy see that
		\[
	\rho(L(n,1),\omega)=-\lmfrac{2k^2}{n} +2k -1 = 2n\theta(1-\theta) -1,
	\]
	and the desired formula is satisfied in this case.
For negative $n$ we obtain now from the last equation that
	\[
\rho(L(n,1),\omega)=-\rho(L(-n,1),\omega)=2n\theta(1-\theta) +1,
\]
which leads to the general formula of the statement.
\end{proof}

\begin{remark}Proposition~\ref{proplens} can be used to write rho invariants of lens spaces as a difference of Dedekind-Rademacher sums. See \cite[Theorem~3.3.27]{et}.
\end{remark}

\section{The cut-and-paste formula}
In Section~\ref{ssmaslov} we recall the definition of the Maslov triple index of three Lagrangian subspaces in a complex symplectic space.
In Section~\ref{sswall}, we review a (non-)additivity theorem of Wall for signatures of manifolds under some fairly general notion of gluing.
In Section~\ref{sscutandpasterho}, we prove the cut-and-paste formula for the Atiyah-Patodi-Singer rho invariants (Theorem~\ref{theocutandpaste}), which is the main result of this section.

\subsection{Complex symplectic spaces and the Maslov triple index}\label{ssmaslov}
A \emph{complex symplectic space} is a pair $(H,\omega)$ such that $H$ is a finite-dimensional complex vector space, and $\omega\colon H\times H\to \C$ is a non-degenerate skew-Hermitian form, called the \emph{symplectic form}. We shall often omit $\omega$ from the notation and simply call $H$ a complex symplectic space.
We recall that a subspace $L\subseteq H$ is \emph{Lagrangian} if it coincides with its orthogonal complement with respect to the symplectic form $\omega$. 
Let $\LL ag(H)$ denote the set of its Lagrangian subspaces. 

In the applications, complex symplectic spaces will arise from the following setting.
Let $\Sigma$ be a $(2k-2)$-dimensional closed, oriented manifold, and let $\alpha\colon \pi_1(\Sigma)\to U(n)$ be a representation. As we have seen, the twisted intersection form on $H:=H_{k-1}(\Sigma;\C^n_\alpha)$ is skew-Hermitian if $k$ is even, and it is Hermitian if $k$ is odd. Moreover, it is non-degenerate because $\Sigma$ is closed. We can thus always consider the non-degenerate, skew-Hermitian form
\[\omega:=\begin{cases}
I_\Sigma^\alpha,  \text{ if $k$ is even},\\
i\, I_\Sigma^\alpha , \text{ if $k$ is odd},
\end{cases}\]
which makes the pair $(H,\omega)$ a complex symplectic space.
We introduce the following notation.
\begin{notation}\label{notacanlag}
	Given a compact, connected $(2k-1)$-dimensional manifold $X$, we set
	\[
	V_X^{\alpha}=\ker(H_k(\partial X;\mathbb {C}^n_{\alpha}) \to H_k(\partial X;\mathbb {C}^n_{\alpha})).
	\]
\end{notation}
A well-known argument based on Poincar\'e duality shows that the subspace $V_X^{\alpha}$ is Lagrangian in $H_k(\partial X;\mathbb {C}^n_{\alpha})$ with respect to the symplectic form $\omega$.
\begin{definition}
	We refer to $V_X^\alpha$ as the \emph{canonical Lagrangian} associated to $X$ and $\alpha$.
\end{definition}

	Going back to the case of an abstract complex symplectic space $(H,\omega)$, we are now going to define a function
		\[\tau\colon \LL ag (H) \times \LL ag (H) \times \LL ag (H) \to \Z.\]
Given three Lagrangian subspaces $L_1, L_2, L_3\in \LL ag(H)$, it is easy to observe that the sesquilinear form 
\begin{equation*}
				\begin{split}
				\psi_{L_1L_2L_3}\colon(L_1+L_2)\cap  L_3 \times    (L_1+L_2)\cap L_3   &\to \mathbb{C} \\
				(a_1+a_2,b_1+b_2)  &\mapsto \omega(a_1,b_2)
				\end{split}
				\end{equation*}
(with $a_1,b_1\in L_1$ and $a_2,b_2\in L_2$) is well defined and Hermitian. In particular, we can give the following definition.				
	\begin{definition} \label{defmaslov}
	The \emph{Maslov triple index} of $(L_1,L_2,L_3)$ is the integer 
	\[	\tau(U,V,W):=\sign \psi_{L_1L_2L_3}.\]
	\end{definition}

	The Maslov triple index satisfies several elementary properties, among which are the following two.
\begin{proposition}\label{propmaslov}
		\begin{enumerate}[(i)]
			\item Let $L_1,L_2,L_3\in \LL ag (H)$, and let $\alpha$ be a permutation of the set $\{1,2,3\}$. Then 
\[
			\tau(L_{\alpha(1)}, L_{\alpha(2)}, L_{\alpha(3)})= \sgn(\alpha)\,\tau(L_1, L_2, L_3).
\]
			In particular, $\tau(L_1,L_2,L_3)=0$ if two of the Lagrangians coincide.
			
			\item Let $L_1,L_2,L_3, L_4\in \LL ag (H)$. Then, $\tau$ satisfies the \emph{cocycle equation}
\[
			\tau(L_1, L_2, L_3) -\tau(L_1, L_2, L_4)+ \tau(L_1, L_3, L_4)-\tau(L_2, L_3, L_4)   =0.
			\]
		\end{enumerate}
\end{proposition}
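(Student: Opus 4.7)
The plan is to prove both properties directly from the definition of $\tau$ as $\sign\psi_{L_1L_2L_3}$ on $(L_1+L_2)\cap L_3$, using the skew-Hermitian property of $\omega$ together with basic behavior of signatures under isomorphisms of Hermitian forms. A preliminary check, which one should record, is that $\psi_{L_1L_2L_3}$ is independent of the decomposition $x=a_1+a_2$ and is in fact Hermitian: if $a_1+a_2=a_1'+a_2'$, then $a_1-a_1'=a_2'-a_2\in L_1\cap L_2$ is $\omega$-orthogonal to both $L_1$ and $L_2$, which forces both well-definedness and the identity $\psi_{L_1L_2L_3}(x,y)=\overline{\psi_{L_1L_2L_3}(y,x)}$.

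For (i), since every permutation of $\{1,2,3\}$ is a product of transpositions and $\sgn$ is multiplicative, it suffices to verify that $\tau$ changes sign under each transposition. The swap $L_1\leftrightarrow L_2$ is the easy case: the domain $(L_1+L_2)\cap L_3$ is unchanged, and for $x=a_1+a_2$, $y=b_1+b_2$ one computes $\psi_{L_2L_1L_3}(x,y)=\omega(a_2,b_1)=-\overline{\omega(b_1,a_2)}=-\psi_{L_1L_2L_3}(x,y)$ using the skew-Hermitian property of $\omega$ and the Hermitian property of $\psi_{L_1L_2L_3}$, so the signatures are opposite. For the transpositions $L_1\leftrightarrow L_3$ and $L_2\leftrightarrow L_3$ the domain itself changes, so I would exhibit an explicit linear isomorphism between the relevant subspaces (for instance, $(L_1+L_2)\cap L_3\to (L_3+L_2)\cap L_1$ sending $a_1+a_2\mapsto a_1=(a_1+a_2)-a_2$ reinterpreted via $a_2\in L_2$ and $-(a_1+a_2)\in L_3$) and verify that this isomorphism carries $\psi$ to its negative. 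The vanishing when two Lagrangians coincide is then immediate, since the transposition exchanging the equal entries both fixes and negates $\tau$.

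For (ii), the plan is to reduce the four-term identity to a single signature computation. I would introduce the auxiliary space
\[
W:=\{(x_1,x_2,x_3,x_4)\in L_1\oplus L_2\oplus L_3\oplus L_4\, : \, x_1+x_2+x_3+x_4=0\}
\]
together with a Hermitian form $h$ on $W$ built out of $\omega$ in a manner compatible with the defining formula for $\psi_{L_iL_jL_k}$. By decomposing $W$ via the projection onto complementary pairs of indices and matching the resulting blocks to the $\psi_{L_iL_jL_k}$'s, one shows that $\sign h$ equals each side of the cocycle identity, forcing their equality. An alternative and often cleaner route is to reduce by a perturbation argument to the case of pairwise transverse Lagrangians, where each Maslov triple index admits an explicit formula as the signature of a quadratic form intrinsic to the triple, and the cocycle becomes a direct algebraic identity among these forms.

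The main obstacle will be part (ii). The skew-symmetry is essentially bookkeeping once one picks the right isomorphisms of domains, but the cocycle equation genuinely mixes three different pairs within a four-Lagrangian configuration and all standard proofs (Wall, Kashiwara--Schapira, \emph{etc.}) involve some delicate accounting. If a self-contained proof becomes cumbersome, I would instead cite the cocycle identity from the standard literature on the Maslov triple index.
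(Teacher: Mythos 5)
The paper does not actually prove this proposition: its ``proof'' is a one-line citation to the literature on the Maslov triple index (proofs in the real symplectic setting that adapt verbatim to the complex one), which is exactly the fallback you offer at the end of your proposal. Your sketch is therefore more ambitious than what the paper does, and it correctly describes how the standard proofs go. Your preliminary check of well-definedness and Hermitianity of $\psi_{L_1L_2L_3}$, and your computation for the transposition $L_1\leftrightarrow L_2$ using the skew-Hermitian property of $\omega$, are complete and correct. For the transpositions moving $L_3$ there is one subtlety you should be explicit about: the assignment $a_1+a_2\mapsto a_1$ is not a well-defined linear map when $L_1\cap L_2\neq 0$ (the $L_1$-component of $x$ is only determined modulo $L_1\cap L_2$), so the ``explicit isomorphism'' must really be an isomorphism of the quotients of the two domains by the radicals of the respective forms, or else one first reduces to the transverse case; signatures are insensitive to radicals, so this repairs the argument but does require saying. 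Your two proposed routes for the cocycle equation (the auxiliary space $W$ of quadruples summing to zero, \`a la Wall, or reduction to pairwise transverse Lagrangians) are both standard and viable, and you are right that this is where the real bookkeeping lives. In short: nothing in your plan is wrong, the one gap is the domain-isomorphism issue just noted, and if you choose not to carry out part (ii) in full, citing the literature is precisely what the paper itself does.
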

\begin{proof}See e.g.\ \cite[Section 8]{maslov} for proofs in the real symplectic setting that can be adapted verbatim to the complex symplectic one.
\end{proof}
\begin{example}\label{extaudim2}
		Suppose that $(H,\omega)$ is a complex symplectic space of dimension $2$.  Let $(\mu,\lambda)$ be an ordered basis in which $\omega$ is represented by the matrix $\left (\begin{smallmatrix} 0&-1\\1& 0\end{smallmatrix}\right )$, i.e. such that \[\omega(\mu,\mu)=\omega(\lambda,\lambda)=0, \quad \omega(\mu,\lambda)=-1\] 
		we shall call such pair a \emph{symplectic basis}. Then, it is easy to verify that a $1$-dimensional subspace is Lagrangian if and only if it is the span of some vector $a\mu+b\lambda$ with $a,b\in\R$. We set in this case the notation
		\[\tau(v_1,v_2,v_3):= \tau (\Span_\C\{v_1\}, \Span_\C\{v_2\}, \Span_\C\{v_3\}).\]
		Using the definition of the Maslov triple index, we easily compute that
	\[\tau (\mu,\lambda, a\mu+b\lambda)= - \sign(ab).\]
\end{example}

\subsection{Wall's non-additivity of the signature}\label{sswall}

We shall now review a result of Wall. We start with some notation that will be useful throughout the paper.
\begin{notation}
Given two topological spaces $X,Y$ with a common subspace $A$, we set
\[ X\cup_A Y:= (X\sqcup Y) \,/ \! \sim,\]
where $\sim$ is the relation that identifies every element in $A\subseteq X$ with its copy in $A\subseteq Y$. We say that  $X\cup_A Y$ is obtained by \emph{gluing} $X$ and $Y$ along $A$. If $X$ and $Y$ are manifolds with $\partial X= \partial Y$, we also write $X\cup_\partial Y$ to denote the gluing along their common boundary.
\end{notation}
Suppose that $M$ is a compact, oriented manifold of dimension $2k$ which is split as 
\[M=M_1\cup_{X_0} M_2\]
 along a properly embedded submanifold  $X_0$ of codimension $1$, which is allowed to have boundary $\Sigma$. Let $X_1:=\partial M_1 \setminus \inte(X_0)$ and $X_2:=\partial M_2 \setminus \inte(X_0)$. As unoriented manifolds, we have then $\partial X_1=\partial X_1 =\partial X_2 = \Sigma$ and
\begin{equation}\label{eqorientationswall}
\partial M_1= X_1\cup_\Sigma X_0,  \quad \partial M_2= X_0\cup_\Sigma X_2, \quad \partial M= X_1\cup_\Sigma X_2 .
\end{equation}
	\begin{center}
 	 \begin{tikzpicture}[scale=1]
 	 	        
 	 	         \fill [lightgray] 
 	 	         	  	 (-9,0) arc (90:-90: 2cm)
 	 	         	  	   (-9,0)  arc (90:270:3 cm and 2cm);
 	 	         	 
 	 	 \draw[thick] (-9,0) arc (90:-90: 2cm);
 	 	 \draw[thick] (-9,0)  arc (90:270:3 cm and 2cm);
 	 	 
 	 	        	 	 \node at (-9,0) {$\bullet$};
 	 	        	 	 	 \node at (-9,0.3) {$\scriptstyle{\Sigma}$};
 	 	        	 	 \node at (-9,-4) {$\bullet$};
 	 	   \draw[thick] (-9,0) -- (-9,-4);
  \node at (-9.3,-2) {$\scriptstyle{X_0}$};
  \node at (-13.5,-2) {$=$};
  \node at (-15,-2) {$M$};
 	 	        	 	  \node at (-12.3,-2) {$\scriptstyle{X_1}$};
 	 	        	 	   	  \node at (-10.5,-2) {$M_1$};
 	 	        	 	   	  \node at (-8,-2) {$M_2$};
 	 	        	 	    \node at (-6.7,-2) {$\scriptstyle{X_2}$};
 	 	        	 	    
 	 \end{tikzpicture}
 	 	\end{center}
 	 	We pick on $X_1$ the orientation coming from being a codimension $0$ submanifold of $\partial M_1$, and we give $\Sigma$ the orientation coming from being the boundary of $X_1$. 
Suppose now that $\alpha\colon\pi_1(M)\to U(n)$ is a representation.
In our setting, $\Sigma$ is (up to orientation) the common boundary of $X_0$, $X_1$ and $X_2$, so that the canonical Lagrangians $V_{X_0}^\alpha$, $V_{X_1}^\alpha$ and $V_{X_0}^\alpha$  all live in the same space $H_{k-1}(\Sigma;\C^n_\alpha)$.
In particular, it makes sense to compute their Maslov triple index. In fact, the following result holds.
\begin{theorem}[Wall's non-additivity]\label{theowall}
Let $M$ be a closed, oriented, even-dimensional manifold, and let $\alpha\colon \pi_1(M)\to U(n)$ be a representation. Then, if $M$ decomposes as $M=M_1\cup_{X_0} M_2$ as above, we have
\[
	\sigma_\alpha( M) = \sigma_\alpha (M_1) + \sigma_\alpha (M_2) - \tau(V_{X_0}^\alpha,V^\alpha_{X_1},V^\alpha_{X_2}).
\]
\end{theorem}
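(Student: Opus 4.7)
The plan is to adapt Wall's original argument for non-additivity of the signature to the twisted setting. All the algebraic-topological inputs used by Wall --- long exact sequences of pairs, Mayer--Vietoris, and Poincar\'e--Lefschetz duality --- admit direct twisted analogues for coefficients in the flat bundle $\C^n_\alpha$ (the extension of $\alpha$ to all of $\pi_1(M)$ is what makes the twisted intersection forms on $M$, $M_1$, $M_2$ simultaneously well-defined). Consequently, the passage from Wall's untwisted statement to the twisted one should be essentially formal once the algebraic skeleton of the proof is in place.

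Concretely, I would first write down the Mayer--Vietoris sequence for $M=M_1\cup_{X_0}M_2$ with $\C^n_\alpha$-coefficients, together with the long exact sequences of the pairs $(M_i,\partial M_i)$. Using twisted Poincar\'e--Lefschetz duality on each $M_i$, I would identify the kernel and image of the natural map
\[j\colon H_k(M_1;\C^n_\alpha)\oplus H_k(M_2;\C^n_\alpha)\to H_k(M;\C^n_\alpha)\]
in terms of the canonical Lagrangians $V^\alpha_{X_0}, V^\alpha_{X_1}, V^\alpha_{X_2}\subseteq H_{k-1}(\Sigma;\C^n_\alpha)$, via the connecting homomorphism of the pair sequence.

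Next, I would decompose $H_k(M;\C^n_\alpha)$ orthogonally with respect to $I^\alpha_M$ into $\im j$ plus a complement. A standard diagram chase shows that $I^\alpha_M$ restricted to $\im j$ splits, modulo its radical, as the orthogonal sum of $I^\alpha_{M_1}$ and $I^\alpha_{M_2}$; this already accounts for the $\sigma_\alpha(M_1)+\sigma_\alpha(M_2)$ term. The orthogonal complement can then be described as a quotient of $V^\alpha_{X_0}\cap (V^\alpha_{X_1}+V^\alpha_{X_2})$, and the induced Hermitian form on this quotient, once one unwinds the various duality isomorphisms, matches exactly the Hermitian form $\psi_{V^\alpha_{X_0}V^\alpha_{X_1}V^\alpha_{X_2}}$ from Definition~\ref{defmaslov}. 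Taking signatures then yields the identity of the theorem.

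The main obstacle I expect is sign and orientation bookkeeping. The orientations of $X_1, X_2$ induced from $\partial M_i$ differ in general from those induced from $\partial M$ (see \eqref{eqorientationswall}), and the factor of $i$ that converts a Hermitian into a skew-Hermitian form when $k$ is odd must be reconciled with the sign of $\tau$ appearing in the statement. The pure homological argument is classical, but verifying that the final form on the quotient is precisely $\psi$ (and not, say, $-\psi$ or the form associated to a different ordering of the Lagrangians) is where the genuine work lies.
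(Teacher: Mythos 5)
Your proposal is correct and coincides with what the paper does: the paper gives no proof of Theorem~\ref{theowall}, instead citing Wall's original untwisted argument (and Py's detailed account) and noting that it ``can be easily checked'' to extend to twisted coefficients, which is exactly the adaptation you outline. Your sketch of the homological skeleton (Mayer--Vietoris, duality, identification of the correction form with $\psi$ from Definition~\ref{defmaslov}) and your flagging of the sign/orientation bookkeeping as the only real work are consistent with how the cited references treat the twisted version.
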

\begin{remark}
Theorem~\ref{theowall} was originally proved by Wall for the untwisted signature \cite{wall} (see also the paper of Py \cite[(3.2)]{py} for a more detailed proof), and it can be easily checked that the result extends to twisted signatures. See \cite{cnt, dfl, dfl2} for further references and uses of the twisted version of the theorem.
\end{remark}

\subsection{The cut-and-paste formula for the rho invariant}\label{sscutandpasterho}
Suppose to have a closed, oriented $(2k-1)$-dimensional manifold that is split by a codimension-one closed manifold $\Sigma$, yielding a decomposition $X_1\cup_\Sigma X_2$. Let $X_0$ be a compact, oriented manifold with $\partial X_0=-\Sigma$. Then, we can replace $X_1\cup_\Sigma X_2$ with the disjoint union of  $X_1\cup_\Sigma X_0$ and $-X_0\cup_\Sigma X_2$. We will call this manipulation \emph{cut-and-paste}. Schematically, we have
\[ X_1\cup_\Sigma X_2 \quad  \leadsto \quad X_1\cup_\Sigma X_0 \:\:\sqcup \:\: -X_0\cup_\Sigma X_2,  \]
and pictorially we can represent the operation as in the next figure.
\begin{center}
	 \begin{tikzpicture}[scale=0.6]
	 	\begin{scope}[shift={(9,0)}]
 \node at (0,1) {};
	 \draw[thick] (0,0) arc (90:270: 3 cm and 2cm);
	 \draw[thick] (0,0) arc (90:-90: 1cm and 2cm);
	 \draw[thick] (5,0) arc (90:-90: 2cm);
	 \draw[thick] (5,0)  arc (90:270:1 cm and 2cm);
	 \node at (0,0) {$\bullet$};
	 \node at (0,0.5) {$\scriptstyle{\Sigma}$};
	 \node at (0,-4) {$\bullet$};
	  \node at (5,0) {$\bullet$};
	  \node at (5,0.5) {$\scriptstyle{\Sigma}$};
	 \node at (5,-4) {$\bullet$};
	 	    \node at (-3.5,-2) {$X_1$};
	 	    \node at (1.5,-2) {$X_0$};
	 	      \node at (3.2,-2) {$-X_0$};
	 	       \node at (7.5,-2) {$X_2$}; 
	 	       \end{scope}
	 	      
	 	       \node at (4,-2) {$\leadsto$};
	 	        
	 \begin{scope}[shift={(-14,0)}]        
	 	 \draw[thick] (14,0) arc (90:-90: 2cm);
	 	 \draw[thick] (14,0)  arc (90:270:3 cm and 2cm);
	 	        	 	 \node at (14,0) {$\bullet$};
	 	        	 	 \node at (14,0.5) {$\scriptstyle{\Sigma}$};
	 	        	 	 \node at (14,-4) {$\bullet$};
	 	        	 	  \node at (10.5,-2) {$X_1$};
	 	        	 	    \node at (16.5,-2) {$X_2$};
	 	   \end{scope}
	 	        	 	    
	 \end{tikzpicture}
	 	\end{center}

Suppose now that $\alpha\colon\pi_1(X_1\cup X_2)\to U(n)$ is a representation.
In particular, $\alpha$ is defined on $\pi_1(X_1)$ and on $\pi_1(\Sigma)$.
In order to have rho invariants of $X_1 \cup_\Sigma X_0$ and $-X_0\cup_\Sigma X_2$ to be compared to $\rho_\alpha(X_1\cup_\Sigma X_2)$, we need to extend $\alpha$ to the fundamental groups of these manifolds. 
 This is possible if and only if we can construct an extension of $\alpha$ from $\pi_1(\Sigma)$ to $\pi_1(X_0)$: then, using Seifert--Van Kampen's theorem, this will be patched with $\alpha\colon \pi_1(X_1)\to U(n)$ to produce representations 
 $\pi_1(X_1 \cup_\Sigma X_0)\to U(n)$ and $\pi_1(-X_0\cup_\Sigma X_2)\to U(n)$. For simplicity, we will use the same notation $\alpha$ for all of these representations.
 Then, we want to compute the correction term $C$ in the formula 
	\begin{equation}\label{eq4}
	\begin{split}
	\rho_\alpha(X_1\cup_\Sigma X_2) = \rho_\alpha(X_1 \cup_\Sigma X_0) + \rho_\alpha(-X_0\cup_\Sigma X_2) + C.
	\end{split}
	\end{equation}
	Now, if  $X_1\cup_\Sigma X_0$ and $-X_0\cup_\Sigma X_2$ bound manifolds $W_1$ and $W_2$ such that the representation extends, then Wall's theorem, together with the Atiyah-Patodi-Singer signature theorem, tells us how to compute the correction term. Namely, in that case we have
	\begin{equation}\label{eqintroC}
	C=\tau(V^\alpha_{X_0},V_{X_1}^\alpha,V^\alpha_{X_2}) - n\, \tau(V_{X_0},V_{X_1},V_{X_2}).
	\end{equation}
The content of the main result of this section is that the correction term $C$ of \eqref{eq4} is always given by \eqref{eqintroC}, no matter whether the manifolds $W_1$ and $W_2$ exist. This should be compared with an analogous result for the untwisted eta invariant \cite[2.5]{bunke}.

%

\begin{theorem}\label{theocutandpaste}
	Let $X_1$, $X_2$ and $X_0$ be compact, oriented manifolds of dimension $2k-1$ with $\partial X_1=-\partial X_0=-\partial X_2$, and let $\alpha\colon \pi_1(X_1\cup_\partial X_2)\to U(n)$ be a representation that extends to $\pi_1(X_0)$. Then, for every choice of such an extension, we have
	\[
	\rho_\alpha(X_1\cup_\Sigma X_2) = \rho_\alpha(X_1 \cup_\Sigma X_0) + \rho_\alpha(-X_0\cup_\Sigma X_2)+
	C,
	\]
	where 
	\[C=\tau(V^\alpha_{X_0},V_{X_1}^\alpha,V^\alpha_{X_2}) - n\, \tau(V_{X_0},V_{X_1},V_{X_2}),\]
	with the first Maslov triple index  performed on $H_{k-1}(\partial X_1;\C^n_\alpha)$, and the second on $H_{k-1}(\partial X_1;\C)$.
\end{theorem}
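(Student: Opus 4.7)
The plan is to reduce to the case where the three closed manifolds $N_0 := X_1 \cup_\Sigma X_2$, $N_1 := X_1 \cup_\Sigma X_0$ and $N_2 := -X_0 \cup_\Sigma X_2$ each bound a compact oriented $2k$-manifold over which $\alpha$ extends; in that case, the formula drops out of a single combined application of the Atiyah--Patodi--Singer signature theorem and Wall's non-additivity.

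In this bounding case, suppose $W_1$ and $W_2$ are compact oriented $2k$-manifolds bounding $N_1$ and $N_2$, each carrying an extension of $\alpha$ (still denoted $\alpha$). Since $X_0$ sits with opposite orientations in $\partial W_1$ and $\partial W_2$, I glue them along $X_0$ (smoothing the corner along $\Sigma$) to form $W_0 := W_1 \cup_{X_0} W_2$, a compact $2k$-manifold with $\partial W_0 = N_0$; by Seifert--van Kampen the two chosen extensions of $\alpha$ combine into a single extension over $W_0$. Applying Theorem~\ref{theowall} to the decomposition $W_0 = W_1 \cup_{X_0} W_2$ twice---once in the untwisted and once in the $\alpha$-twisted version---then subtracting $n$ times the untwisted equation from the twisted one and invoking $\rho_\alpha(N_i) = n\,\sigma(W_i) - \sigma_\alpha(W_i)$ (Theorem~\ref{theoapsrho}(ii)) on each $i \in \{0,1,2\}$ produces
\[
\rho_\alpha(N_0) - \rho_\alpha(N_1) - \rho_\alpha(N_2) = \tau(V^\alpha_{X_0}, V^\alpha_{X_1}, V^\alpha_{X_2}) - n\,\tau(V_{X_0}, V_{X_1}, V_{X_2}) = C,
\]
which is the claimed identity.

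To drop the bounding assumption, I use that the bordism group $\Omega^{SO}_{2k-1}(BU(n))$ is torsion: since $H_*(BU(n);\Q)$ is concentrated in even degrees, the rational Atiyah--Hirzebruch spectral sequence computing odd-dimensional oriented bordism collapses to zero. Hence, for some $r \in \N$, the disjoint unions $rN_1$ and $rN_2$ bound manifolds $W_i^{(r)}$ to which $\alpha$ extends, and gluing these along $rX_0$ as above yields $W_0^{(r)}$ bounding $rN_0$. Both $\rho_\alpha(-)$ and the correction term $C$ are additive under taking disjoint copies (for $C$, because the Lagrangians in $H_{k-1}(rX_i;\C^n_\alpha)$ split as direct sums of $r$ copies of the original Lagrangians, and the Maslov triple index is additive on such direct sums). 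The bounding case therefore gives the desired identity multiplied by $r$, and dividing by $r$ finishes the argument.

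I expect the bordism reduction to be the only substantive obstacle; once we are in the bounding setting the argument is essentially formal bookkeeping combining Theorems~\ref{theoapsrho} and~\ref{theowall}. In the $3$-dimensional, $U(1)$-setting of the paper's applications one in fact has the stronger vanishing $\Omega_3^{SO}(BU(1)) = 0$, so every relevant $N_i$ already bounds and the reduction step can be skipped entirely.
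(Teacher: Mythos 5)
Your ``bounding case'' computation is correct, and it is essentially the same algebra that drives the paper's proof: one application each of Wall's non-additivity (twisted and untwisted) plus Theorem~\ref{theoapsrho}(ii) on the three boundary pieces. The genuine gap is the reduction to that case. A representation $\alpha\colon\pi_1(N)\to U(n)$ is classified by a map to $BU(n)^{\delta}$ (the classifying space of $U(n)$ made discrete), or to $B\Gamma$ for $\Gamma$ the image of $\alpha$ --- not by a map to the topological space $BU(n)$. A nullbordism over $BU(n)$ extends the underlying vector bundle but not the flat structure, whereas Theorem~\ref{theoapsrho}(ii) requires the representation itself to extend to $\pi_1(W_i)$. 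The relevant bordism groups are not torsion: for instance $\Omega_3^{SO}(B\Z^3)\cong H_3(T^3;\Z)\cong\Z$, so already for a $U(1)$-representation of a $3$-manifold whose image is free abelian of rank at least $3$ no multiple $rN_i$ need bound compatibly; and $H_*(BU(n)^{\delta};\Q)$ is certainly not concentrated in even degrees, since $H_1(BU(1)^{\delta};\Q)=U(1)\otimes\Q\ne 0$. This failure is exactly why rho invariants are not always computable from \eqref{eqintroaps} and why Casson and Gordon restrict to finite-image representations. Your closing remark that $\Omega_3^{SO}(BU(1))=0$ disposes of the paper's applications has the same flaw: $BU(1)=\C P^\infty$ classifies line bundles, not $U(1)$-representations of $\pi_1$.

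The paper sidesteps the issue entirely with a cylinder trick: instead of seeking nullbordisms of $N_1$ and $N_2$, it takes $M_1=[0,1]\times(X_1\cup_\Sigma X_0)$ and $M_2=[0,1]\times(-X_0\cup_\Sigma X_2)$, to which $\alpha$ extends tautologically, and glues them along $\{1\}\times X_0$. The resulting $M$ has boundary $-N_1\sqcup -N_2\sqcup N_0$, so a single application of Theorem~\ref{theoapsrho}(ii) gives $\rho_\alpha(N_0)-\rho_\alpha(N_1)-\rho_\alpha(N_2)=n\,\sigma(M)-\sigma_\alpha(M)$, and Wall's theorem evaluates the right-hand side as exactly $C$, because all signatures of the cylinders vanish (reflection in the interval factor reverses orientation). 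If you replace your $W_1,W_2$ by these cylinders, viewed as bordisms rather than nullbordisms, your own bookkeeping goes through verbatim and the bordism reduction becomes unnecessary.
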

\begin{proof}
Consider the oriented manifolds 
\[M_1:=[0,1]\times (X_1 \cup_\Sigma X_0),\quad M_2:=[0,1]\times (-X_0\cup_\Sigma X_2).\]
 We glue then $M_1$ with $M_2$ along $\{1\}\times X_0$, obtaining a topological oriented manifold $M$ to which $\alpha$ extends. 
 	 	\begin{center}
 	 \begin{tikzpicture}[scale=0.6]
 	 \node at (0,1) {};
 	  \fill [lightgray] 
 	  (0,0) arc (90:270: 3 cm and 2cm)
 	  (0,-4) arc (-90:90: 1cm and 2cm);
 	 \fill[white] 
          (-0.5,-1.25) arc (90:270:1.25cm and 0.75cm)
 	  (-0.5,-2.75) arc (-90:90:0.25cm and 0.75cm);
 	 \draw[thick] (0,0) arc (90:270: 3 cm and 2cm);
 	 \draw[thick] (0,0) arc (90:-90: 1cm and 2cm);
 	 	 \draw[thick] (-0.5,-1.25) arc (90:270:1.25cm and 0.75cm);
 	  	 \draw[thick] (-0.5,-1.25) arc (90:-90:0.25cm and 0.75cm);
\draw[thin] (-1,-3.375) -- (-1.25, -4.5);
 \node at (-1.25,-5) {$M_1$};
 	 \fill [lightgray] 
 	  	  (5.5,0)  arc (90:270:1 cm and 2cm)
 	  	  (5.5,-4) arc (-90:90: 2cm);
 	  	 \fill[white] 
 	           (5.875,-1.375)  arc (90:270:0.25 cm and 0.625cm)
 	  	 (5.875,-2.625) arc (-90:90: 0.5cm and 0.625cm);
 	 	 \draw[thick] (5.5,0)  arc (90:270:1 cm and 2cm);
 	 \draw[thick] (5.5,-4) arc (-90:90: 2cm);
	 \draw[thick] (5.875,-1.375)  arc (90:270:0.25 cm and 0.625cm);
 	 \draw[thick] (5.875,-1.375) arc (90:-90: 0.5cm and 0.625cm);
 	 \draw[thin] (6.25,-3.375) -- (6.5, -4.5);
 	  \node at (6.5,-5) {$M_2$};
 	 \node at (0,0) {$\bullet$};
 	 \node at (0,0.5) {$\scriptstyle{\Sigma}$};
 	 \node at (0,-4) {$\bullet$};
 	  \node at (5.5,0) {$\bullet$};
 	  	 \node at (5.5,0.5) {$\scriptstyle{\Sigma}$};
 	 \node at (5.5,-4) {$\bullet$};
 	
 	 	    \node at (-3.4,-2) {$\scriptstyle{X_1}$};
 	 	    \node at (1.4,-2) {$\scriptstyle{X_0}$};
 	 	     \node at (2.75,-2) {$\cup$};
 	 	      \node at (3.9,-2) {$\scriptstyle{-X_0}$};
 	 	       \node at (7.9,-2) {$\scriptstyle{X_2}$}; 
 	 	        \node at (9.25,-2) {$=$};
 	 	        
 	 	         \fill [lightgray] 
 	 	         	  	 (14,0) arc (90:-90: 2cm)
 	 	         	  	   (14,0)  arc (90:270:3 cm and 2cm);
 	 	         	  	 \fill[white] 
 	 	         	           (13.375,-1.25) arc (90:270:1.25cm and 0.75cm)
 	 	         	  	 (13.375,-2.75) arc (-90:90:0.25cm and 0.75cm);
 	 	         \fill[white] 
 	 	         (14.5,-1.375)  arc (90:270:0.25 cm and 0.625cm)
 	 	         (14.5,-2.625) arc (-90:90: 0.5cm and 0.625cm);
 	 	 \draw[thick] (14,0) arc (90:-90: 2cm);
 	 	 \draw[thick] (14,0)  arc (90:270:3 cm and 2cm);
 	 	 	 \draw[thick] (13.375,-1.25) arc (90:270:1.25cm and 0.75cm);
 	 \draw[thick] (13.375,-1.25) arc (90:-90:0.25cm and 0.75cm);
 	  \draw[thick] (14.5,-1.375)  arc (90:270:0.25 cm and 0.625cm);
   	 \draw[thick] (14.5,-1.375) arc (90:-90: 0.5cm and 0.625cm);
 	 \draw[decoration={mirror, brace,raise=5pt},decorate]
 	  	  	 	   (11,-4) --  (16, -4);
 	  	  	 	   	  \node at (13.5,-5) {$M$};
 	 	        	 	 \node at (14,0) {$\bullet$};
 	 	        	 	 	 \node at (14,0.5) {$\scriptstyle{\Sigma}$};
 	 	        	 	 \node at (14,-4) {$\bullet$};
 	 	   \draw[thick, dashed] (14,0) -- (14,-4);
 	 	        	 	  \node at (10.6,-2) {$\scriptstyle{X_1}$};
 	 	        	 	    \node at (16.4,-2) {$\scriptstyle{X_2}$};
 	 	        	 	    
 	 \end{tikzpicture}
 	 	\end{center}
 The boundary of $M$ can be described topologically as
 \begin{equation}\label{eqboundaryM}
 \partial M=(-(X_1 \cup_\Sigma X_0)\sqcup -(-X_0\cup_\Sigma X_2)  )  \sqcup (X_1\cup_\Sigma X_2 ),\end{equation}
 and we can equip $M$ with a smooth structure such that \eqref{eqboundaryM} is satisfied in the smooth sense \cite[15.10.3]{tomdieck}. 
Thanks to Theorem~\ref{theoapsrho} (ii), we have then
  \begin{equation}\label{eqapsM}
  \rho_\alpha(\partial M)=n\,\sigma(M)-\sigma_\alpha(M).
  \end{equation}
By \eqref{eqboundaryM}, the left-hand term is given by
 \begin{equation}\label{eqrhoboundaryM}
  \rho_\alpha(\partial M)=-\rho_\alpha(X_1 \cup_\Sigma X_0) -\rho_\alpha(-X_0\cup_\Sigma X_2) +\rho_\alpha (X_1\cup_\Sigma X_2 ).\end{equation}
By Wall's non-additivity (Theorem~\ref{theowall}),  we can compute the twisted and untwisted signature of $M$  as
\[ 
 \begin{split}
  \sigma(M)&=\sigma(M_1)+\sigma(M_2) -\tau(V_{X_0},V_{X_1},V_{X_2}),\\
 \sigma_\alpha(M)&=\sigma_\alpha(M_1)+\sigma_\alpha(M_2) -\tau(V^\alpha_{X_0},V_{X_1}^\alpha,V^\alpha_{X_2}).
 \end{split}
\]
Now, all signatures of $M_1$ and $M_2$ vanish, as the reflection along the interval factor gives an orientation-reversing self-diffeomorphism which does not affect the representation. We obtain thus
\begin{equation}\label{eqwallM}
  \sigma(M)=-\tau(V_{X_0},V_{X_1},V_{X_2}),\quad \quad 
 \sigma_\alpha(M)= -\tau(V^\alpha_{X_0},V_{X_1}^\alpha,V^\alpha_{X_2}).
\end{equation}
Substituting \eqref{eqrhoboundaryM} and \eqref{eqwallM} into \eqref{eqapsM}, we get the desired formula.
\end{proof}

\begin{remark}
An alternative approach to proving Theorem~\ref{theocutandpaste} would be by using gluing formulas for rho invariants for manifolds with boundary, as defined by Kirk and Lesch \cite{kl1,kl2} (the discussion in \cite[Section~8.3]{kl1} might be hinting in this direction). This approach allows to prove the cut-and-paste formula at the level of eta invariants, leading to a slightly stronger result, albeit at the cost of more sophisticated tools to be introduced. 
See \cite[Section~2.3.4]{et} for details about this point of view.
\end{remark}

%
%
%
%
%

\section{Signatures of links and rho invariants}
In Section~\ref{ssframings} we introduce rationally framed links and set up some notation and easy results.
In Section~\ref{sscolored}, we introduce the multivariable signatures of Cimasoni and Florens
and recall a $4$-dimensional description for them.
In Section~\ref{sstheoyl}, we prove Proposition~\ref{extheomultisign}, which reinterprets the multivariable signature of a colored link as the rho invariants of some closed $3$-manifold associated to the link.
In Section~\ref{ssintegral}, we prove Theorem~\ref{theointeger}, which is a formula relating the multivariable signature of a link with the rho invariant of the $3$-manifolds obtained by integer surgery on the link.
In Section~\ref{ssrational}, we prove Theorem~\ref{theorational}, which under some additional hypotheses does the same for rational surgery.

\begin{remark}
In this section, given a manifold $X$, we will only deal with $1$-dimensional unitary representations of $\pi_1(X)$. As $U(1)$ is an abelian group, every representation $\alpha'\colon \pi_1(X)\to U(1)$ factors through the abelianization $\ab\colon \pi_1(X)\to H_1(X;\Z)$, and we can thus focus on representations $\alpha\colon H_1(X;\Z)\to U(1)$. 
We will normally write $\rho_\alpha$ and $H_\alpha$ to denote rho invariants and twisted homology associated to the representation $\alpha'=\alpha\circ \ab$.
\end{remark}
\begin{remark}\label{remarknontrivial}
A straightforward computation in twisted homology leads to the following well-known fact: for the $2$-dimensional torus $T^2$, we have
	$H_*(T^2;\C^n_\alpha)=0$
	for all non-trivial representation $\alpha\colon H_1(T;\Z)\to U(1)$.
	In this section, we will always use Theorem~\ref{theocutandpaste} in the situation where $\Sigma$ is a disjoint union of $2$-dimensional tori and where the restriction of $\alpha$ to the first homology of these is non-trivial. As a consequence, the Maslov triple index in twisted homology will always be $0$.
\end{remark}

\subsection{Links and framings}\label{ssframings}

Let $L=K_1\cup \cdots \cup K_k$ be an oriented link in $S^3$ (from now on, just a \emph{link}). 
By removing from $S^3$ the interior of a closed tubular neighborhood $N(L)$, we get its \emph{link exterior} 
\[X_L:=S^3 \setminus \inte (N(L)).\]
The link exterior $X_L$ is a compact, oriented $3$-manifold, whose boundary is a union of tori: to each link component $K_i\subseteq L$, there corresponds a boundary component $T_i=-\partial (N(K_i))$ (this is the orientation coming from being part of the boundary of $X_L$, and it is the one we shall always consider).
The link component $K_i$ determines the following two elements:
\begin{itemize}
	\item the \emph{meridian} of $K_i$ is the only element  $\mu_i\in H_1(T_i;\Z)$ whose image in $H_1(N(K_i);\Z)$ is $0$ and such that $\lk (\mu_i, K_i)=1$;
	\item the \emph{standard longitude} of $K_i$ is the only element $\lambda_i^s\in H_1(T_i;\Z)$   whose image in $H_1(N(K_i);\Z)$  is homologous to $K_i$ and such that $\lk (\lambda_i^s, K_i)=0$.
\end{itemize}
For a knot $K$, we shall often just use the notation $T$, $\mu$ and $\lambda$ for the boundary torus, the meridian and the standard longitude.
Observe that the algebraic intersection of these two elements is given by
\begin{equation}\label{eqintersection-1}
\mu_i\cdot\lambda_i^s=-1.
\end{equation}
We shall often consider the images of the above elements into homology with rational or complex coefficients without changing their names.

\begin{definition}
	A \emph{rational framing} on a link $L=K_1\cup\cdots \cup K_k$ is a $k$-tuple of rational numbers $r=(r_1,\dotsc, r_k)$. The pair $(L,r)$ is called a \emph{rationally framed link}. If all $r_i$'s are integers, we say that $r$ is an \emph{integer framing}. The \emph{framed longitudes} of a rationally framed link $(L,r)$ are the elements
	\[\lambda_i:=\lambda_i^s + r_i \mu_i\in H_1(T_i;\Q).\]
\end{definition}
Without changing notation for them, we shall now consider the images of the meridians and framed longitudes in the homology of the link complement $X_L$. It is an elementary well know fact that $H_1(X_L;\Z)$ is a free $\Z$-module generated by the meridians, and that each standard longitude satisfies
\begin{equation}\label{eqstandardlong}
\lambda_i^s = \sum_{j\ne i}\lk (K_i,K_j)\, \mu_j \in H_1(X_L;\Z).
\end{equation}
Next, we define the following matrix associated to a rationally framed link. 
\begin{definition}
	The \emph{framed linking matrix} of a framed $k$-component link $(L,r)$ is the symmetric matrix $\Lambda_r=(\Lambda_{ij})_{i,j}\in\Q^{k\times k}$ defined by
	\[\Lambda_{ij}=
	\begin{cases} 
	\lk (K_i, K_j), \quad &\text{if }i\ne j,\\
	r_i, \quad &\text{if } i=j.
	\end{cases}\]
\end{definition}
\begin{example}\label{exseifert}
	The \emph{Seifert framing} on a link $L=K_1\cup \cdots \cup K_k$ is the integer framing $(f_1, \cdots , f_k)$ defined by
	\[f_i:=-\sum_{j\ne i} \lk(K_i,K_j).\]
	In particular, the coefficients of its framed linking matrix satisfy
	\[\Lambda_{ii}=-\sum_{j\ne i}\Lambda_{ij}.\]
	 The framed longitudes $\lambda_i=\lambda_i^s + f_i\mu_i$ associated to the Seifert framing correspond to the intersections of a Seifert surface with the boundary tori $T_i$.
\end{example}
From \eqref{eqstandardlong}, together with the definition of the framed longitudes and of the framed linking matrix, it follows now immediately that the framed longitudes in the rational homology of $X_L$ are equal to
\begin{equation}\label{eqframedlong}
\lambda_i = \sum_{j=1}^k\Lambda_{ij}\, \mu_j \in H_1(X_L;\Q).
\end{equation}

For the computations involving the Maslov triple index, a good understanding of the complex symplectic space $H_1(\partial X_L;\C)$ is required. The next result summarizes some easy facts that we will need later.
\begin{lemma}\label{lemmasymplecticlink}
	Let $(L,r)$ be a $k$-component rationally framed link. Then: 
	\begin{enumerate}[(i)]
		\item the collection $\{\mu_1, \dotsc \mu_k, \, \lambda_1, \dotsc \lambda_k\}$ forms a basis for $H_1(\partial X_L;\C)$ which satisfies 
		\[
		\begin{cases}
		\mu_i\cdot\mu_j=\lambda_i\cdot\lambda_j=0,\\
		\mu_i\cdot \lambda_j = -\delta_{ij}
		\end{cases}
		\quad \text{for all }i,j;
		\]
		\item the canonical Lagrangian $V_{X_L}$ can be described explicitly as
		\[	V_{X_L}= \Span_\C\{v_1, \dotsc, v_k \}, \quad \text{where  } v_i=\lambda_i - \sum_{s=1}^k\Lambda_{is} \mu_s;\]
		\item the subspaces $\MM:=\{\mu_1,\dotsc , \mu_k\}$  and $\LL_r=\Span_\C\{\lambda_1,\dotsc , \lambda_k\}$ are Lagrangians, and their trisple Maslov index with the canonical Lagrangian is given by
		\[\tau(\MM, \LL_r, \VV_{X_L})=\sign \Lambda_r\]
	\end{enumerate}
\end{lemma}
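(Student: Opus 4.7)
For part (i), I would use that $\partial X_L$ is the disjoint union of the tori $T_i$, so $H_1(\partial X_L; \C) = \bigoplus_i H_1(T_i; \C)$. On each $T_i$, the pair $(\mu_i, \lambda_i^s)$ is a basis satisfying $\mu_i \cdot \lambda_i^s = -1$ by \eqref{eqintersection-1}. Since $\lambda_i = \lambda_i^s + r_i \mu_i$ is obtained from $\lambda_i^s$ by a shear along $\mu_i$, the pair $(\mu_i, \lambda_i)$ is still a basis, and the self-intersection on the torus gives $\mu_i \cdot \mu_i = \lambda_i \cdot \lambda_i = 0$ and $\mu_i \cdot \lambda_i = \mu_i \cdot \lambda_i^s + r_i(\mu_i \cdot \mu_i) = -1$. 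Intersection numbers between classes on different tori vanish since the tori are disjoint.

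For part (ii), I would recall that $H_1(X_L; \Z)$ is the free abelian group on $\mu_1, \dotsc, \mu_k$, and use formula \eqref{eqframedlong}, which says $\lambda_i = \sum_s \Lambda_{is} \mu_s$ in $H_1(X_L; \Q)$. This immediately gives that each $v_i = \lambda_i - \sum_s \Lambda_{is} \mu_s$ lies in the kernel of $H_1(\partial X_L; \C) \to H_1(X_L; \C)$. The $v_i$ are linearly independent because their $\lambda$-components form the identity matrix in the basis from (i). Since the canonical Lagrangian is half-dimensional, i.e.\ of dimension $k$, these $k$ vectors span it.

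For part (iii), note that each of $\MM$ and $\LL_r$ is $k$-dimensional and isotropic by part (i) (all pairwise intersections among $\mu$'s or among $\lambda$'s vanish), hence Lagrangian. To compute the Maslov index, I would observe from (i) that $\MM \cap \LL_r = 0$, so $\MM + \LL_r = H_1(\partial X_L; \C)$ and therefore $(\MM + \LL_r) \cap V_{X_L} = V_{X_L}$. Using the basis $\{v_i\}$ of part (ii), the unique decomposition is $v_i = a_i + b_i$ with $a_i = -\sum_s \Lambda_{is} \mu_s \in \MM$ and $b_i = \lambda_i \in \LL_r$. By Definition~\ref{defmaslov} and part (i),
\[
\psi_{\MM \LL_r V_{X_L}}(v_i, v_j) = \omega(a_i, b_j) = -\sum_s \Lambda_{is}(\mu_s \cdot \lambda_j) = \Lambda_{ij},
\]
so the Hermitian form $\psi$ is represented by the real symmetric matrix $\Lambda_r$ in the basis $\{v_i\}$, and hence $\tau(\MM, \LL_r, V_{X_L}) = \sign \Lambda_r$.

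The only delicate point is bookkeeping the signs and conventions (in particular that here $\dim \Sigma = 2$ so $\omega = I_\Sigma$, and that $\mu_i \cdot \lambda_j = -\delta_{ij}$ rather than $+\delta_{ij}$); once these are lined up, the computation of $\psi$ drops out directly and produces $\Lambda_r$ rather than $-\Lambda_r$.
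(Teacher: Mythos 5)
Your proposal is correct and follows essentially the same route as the paper: (i) and (ii) are read off from \eqref{eqintersection-1} and \eqref{eqframedlong}, and (iii) is computed directly from Definition~\ref{defmaslov} via the decomposition $v_i = \bigl(-\sum_s \Lambda_{is}\mu_s\bigr) + \lambda_i$, yielding $\psi(v_i,v_j)=\Lambda_{ij}$ exactly as in the paper. Your version is somewhat more explicit about the linear-independence and transversality checks, but there is no substantive difference.
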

\begin{proof}
	(i) is an immediate consequence of the definition of the framed meridians together with \eqref{eqintersection-1}. (ii) is an immediate consequence of \eqref{eqframedlong}. The fact that $\MM$ and $\LL_r$ are Lagrangians is obvious from $(i)$. We compute the Maslov triple index using the definition. As $\MM$ and $\LL_r$ are transverse, we have 
	$(\MM + \LL_r)\cap \VV_{X_L} = \VV_{X_L}$, and every generator $v_i$ can be written in a unique, obvious way as the sum of an element in $\MM$ and one in $\LL_r$. By Definition \ref{defmaslov}, then, $\tau(\MM, \LL_r, \VV_{X_L})$ is the signature of the Hermitian form
	$\psi\colon V_{X_L}\times V_{X_L}\to \C$ defined on the basis elements of $V_{X_L}$ by
	\[\psi (v_i, v_j)= \Big(- \sum_{s=1}^k\Lambda_{is} \mu_s \Big)\cdot v_j= -\sum_{s=1}^k \Lambda_{is} (\mu_s\cdot \lambda_j) =\Lambda_{ij}. \]
	It follows that $\tau(\MM, \LL_r, \VV_{X_L})= \sign \Lambda_{f_L}$, and (iii) is also proved.
\end{proof}

Given a rationally framed link $(L,r)$, we can consider the closed manifold $S_L(r)$ obtained by the \emph{Dehn surgery} along the framing. This is done in the following way: for each link component $K_i$, we choose coprime integers $(p_i,q_i)$ such that $p_i/q_i=r_i$, and glue a solid torus $Y_i$ to $X_L$ along the boundary component $T_i$ in such a way that the meridian of the solid torus is identified with the element $p_i\mu_i+q_i\lambda_i^s\in H_1(T_i;\Z)$.
In particular, $H_1(S_L(r);\Z)$ can be described as a quotient of $H_1(X_L;\Z)$.
\begin{definition}
	Given a link $L$ with a representation $\alpha\colon H_1(X_L;\Z)\to U(1)$,  we say that a rational framing $r$ on $L$ is \emph{compatible} with $\alpha$ if $\alpha$ factors through  $H_1(S_L(r);\Z)$.
\end{definition}
\begin{remark}\label{remcompatible}
From the definition of surgery, it is clear that a rational framing $r=(r_1,\dotsc , r_k)$ with $r_i=p_i/q_i$ as above is compatible with $\alpha$ if and only if $\alpha(p_i\mu_i+q_i\lambda_i^s)=1$ for all $i$.
Using \eqref{eqstandardlong}, we see that in terms of the coefficients of the framed linking matrix we have
\[
f \text{ is compatible with } \alpha 
\iff \prod_{j=1}^k\alpha(\mu_j)^{q_i\Lambda_{ij}}=1 \:\:\:\:\forall i  .
\]
\end{remark}
Note that, in general, given a representation $\alpha$, there might be no rational framing that is compatible with it.

     \subsection{Colored links and signatures}\label{sscolored}
A \emph{$n$-coloring} on a link $L$, for $n\in \N$, is a partition of its components into $n$ non-empty sublinks. Given a $k$-component link $L=K_1\cup \cdots \cup K_k$, we identify the coloring with a surjective function $c\colon \{1, \dotsc , k\}\to \{1, \dotsc , n\}$. The latter is the set of colors, and for every $1\leq s\leq n$ we define 
\[L_s:= \lmcup{c(j)=s}{} K_j\]
to be the sublink \emph{of color $s$}. The pair $(L,c)$ is called an \emph{$n$-colored} link. We shall systematically omit $c$ (which has to be considered as fixed) and simply call $L$ an $n$-colored link.

\begin{notation}
	For $n\in\N$, set $\T^n:=(U(1))^n$ and $\T_*^n:=(U(1)\setminus\{1\})^n$.
\end{notation}
     Using a generalization of the concept of Seifert surfaces (called \emph{C-complexes}), Cimasoni and Florens defined a multivariable version of the Levine-Tristram signature of a link \cite{cf}. 
  Given an $n$-colored link $L$, their multivariable signature is a function
     \[\sigma_L\colon \T_*^n \to \Z,\]
    which coincides for $n=1$ with the Levine-Tristram signature function.
We recall now the following $4$-dimensional description of the multivariable signature. We first need a definition.
     \begin{definition}
     A \emph{bounding surface} for an $n$-colored link $L$ is a union $F = F_1\cup \cdots \cup F_n$ of
     properly embedded, locally flat, compact, oriented surfaces~$F_i \subseteq D^4$
     with $\partial F_i = L_i\in \partial D^4=S^3$ and that only intersect each other transversally in double points.
     \end{definition}
 A bounding surface for $L$ can be obtained for example by pushing the interior of a C-complex into the interior of $D^4$ (see e.g.\ \cite[Section~3]{cft} for details).
     Given a bounding surface $F=F_1\cup \cdots \cup F_n\subseteq D^4$ for a link $L$, we can take a small tubular neighborhood $N(F_i)$ of each surface $F_i$ and define the \emph{exterior} of $F$ in $D^4$ as the $4$-manifold with boundary
     \[W_F:=D^4\sm (N(F_1)\cup\cdots \cup N(F_n)).\]
     It is easy to show that $H_1(W_F;\Z)$ is freely generated by the meridians of the surfaces $F_1, \dotsc, F_n$ (the meridian of $F_i$ being the image in $H_1(W_F;\Z)$ of any of the meridians of $L_i$). The following description of the Cimsoni-Florens signature is known \cite[Proposition~3.5]{cnt}.
     \begin{proposition}\label{propmultisign}
     Let $L$ be a an $n$-colored link in $S^3$ and let $F=F_1\cup \cdots \cup \F_n$ be a bounding surface for $L$. Let $\omega=(\omega_1, \dotsc, \omega_n)\in \T_*^n$, and let $\alpha\colon H_1(W_F;\Z)\to U(1)$ be the representation that sends the meridian of $F_i$ to $\omega_i$.  Then, we have
     \[\sigma_L(\omega)= \sigma_\alpha(W_F). \]
     \end{proposition}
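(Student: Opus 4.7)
The plan is to identify the twisted intersection form on $H_2(W_F;\C_\alpha)$ with the generalized Seifert pairing that defines the Cimasoni--Florens signature. This is the strategy carried out in the cited \cite[Proposition~3.5]{cnt}; for an independent argument I would proceed as follows.

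First, since the statement is an equality of invariants of $L$ and $\omega$ (and the right-hand side depends on $F$ only through its boundary, as we shall justify at the end), I would reduce to the case where $F$ is obtained by pushing the interior of a C-complex $S=S_1\cup\cdots\cup S_n$ for $L$ into $\inte(D^4)$. With $F$ of this form, $W_F$ admits an explicit handle decomposition read off from $S$: one 0-handle, one 1-handle per surface component of each $F_i$ (dual to a meridian), and 2-handles in bijection with a basis of $H_1(S;\Z)$, each attached along the push-off in the normal direction of $F$ of the corresponding $1$-cycle on $S$.

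Next, using this handle decomposition together with the fact that $\alpha$ sends the $i$-th meridian to $\omega_i\ne 1$, I would compute the twisted chain complex $C_*(W_F;\C_\alpha)$. The space of $2$-chains is naturally identified with $H_1(S;\C)$, and a direct computation shows that the twisted intersection pairing in this basis is the Hermitian matrix
\[
H(\omega)\;=\;\sum_{\epsilon\in\{\pm 1\}^n}\prod_{i=1}^n(1-\bar\omega_i^{\,\epsilon_i})\,A^{\epsilon},
\]
where $A^\epsilon$ are the generalized Seifert matrices arising from the $2^n$ possible push-offs of the C-complex (the sign $\epsilon_i$ records on which side of $S_i$ the push-off is taken). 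Since $\sigma_L(\omega)$ is defined as $\sign H(\omega)$, this yields $\sigma_\alpha(W_F)=\sigma_L(\omega)$.

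The main obstacle is the bookkeeping in the intersection pairing computation. Each time a push-off of a $1$-cycle on $S$ crosses one of the colored surfaces $S_i$, it picks up a factor $(1-\omega_i)$ or $(1-\bar\omega_i)$ in the twisted homology, and near the clasp intersections of $S_i$ with $S_j$ these factors must combine in precisely the product form appearing in $H(\omega)$. Once this local computation is verified, independence of the choice of C-complex follows from the Cimasoni--Florens invariance theorem; independence of the bounding surface $F$ in general follows from Theorem~\ref{theoapsrho}, since any two bounding surfaces can be joined by a cobordism over which $\alpha$ extends, reducing the comparison of twisted signatures to a rho invariant of a closed $3$-manifold that depends only on $(L,\omega)$.
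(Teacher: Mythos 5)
The paper does not actually prove this proposition: it is quoted directly from \cite[Proposition~3.5]{cnt}, and the computational core of your sketch --- pushing in a C-complex, reading off a handle decomposition of $W_F$ with $2$-handles indexed by a basis of $H_1(S;\Z)$, and identifying the twisted intersection form with $H(\omega)=\sum_{\epsilon}\prod_i(1-\bar\omega_i^{\epsilon_i})A^\epsilon$ --- is precisely the argument carried out there (going back to Cimasoni--Florens and, for $n=1$, to the classical Seifert surface computation). That part is sound, modulo the bookkeeping you acknowledge, and modulo the fact that $\sigma(W_F)=0$ (which you use implicitly) itself needs the small argument of \cite[Proposition~3.3]{cnt}.

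The genuine gap is your reduction from arbitrary bounding surfaces to pushed-in C-complexes. You argue that $\sigma_\alpha(W_F)$ depends on $F$ only through $L$ because, by Theorem~\ref{theoapsrho}, it equals $-\rho_\alpha(\partial W_F)$, and you then describe $\partial W_F$ as ``a closed $3$-manifold that depends only on $(L,\omega)$''. That last claim is false: $\partial W_F = X_L\cup_\partial(-P_{\Gamma_F})$, and the plumbing graph $\Gamma_F$ records the actual double points of the $F_i$'s, of which only the algebraic count $\lk(L_s,L_t)$ is determined by $L$. Two bounding surfaces with different geometric intersection numbers give non-diffeomorphic boundaries, so the APS theorem alone does not yield independence --- this is exactly the problem that Construction~\ref{construction} and Proposition~\ref{extheomultisign} exist to solve, via the cut-and-paste formula, the equality $\ker\varphi=\ker\psi$, and the vanishing $\rho_\alpha(P_\Gamma)=0$ for balanced plumbing graphs. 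To close the gap you must either run that cut-and-paste argument to show $\rho_\alpha(\partial W_F)$ is the same for all $F$, or argue directly by realizing $W_F\cup_{X_L}(-W_{F'})$ as the exterior of the closed surface $F\cup_L(-F')$ in $S^4$, proving its twisted signature vanishes, and controlling the Wall/Maslov correction term from Theorem~\ref{theowall}. Either route requires genuine work that your one-sentence appeal to a cobordism does not supply.
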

We conclude this part with a couple of definitions.
\begin{definition}\label{defcolseifert}
	Let $L=K_1\cup\cdots \cup K_k$ be an $n$-colored link. 
	The \emph{colored Seifert framing} on $L$ is the integer framing $f_L=(f_1,\dotsc , f_k)$ given by 
	\[f_i:=- \!\!\!\sum_{\substack{j\ne i \text{ s.t.}\\ c(j)=c(i)}}\!\!\!\lk(K_i,K_j).\]
	In other words, it is the framing obtained by providing each colored sublink $L_i$ with its Seifert framing (see Example \ref{exseifert}). 
\end{definition}
Observe that the colored Seifert framing on $L$ needs not coincide with the Seifert framing of the underlying link. In fact, its framed longitudes correspond to the intersection of $\partial X_L$ with a C-complex. From the definition of the colored Seifert framing, it is immediate to see that the coefficients of the associated framed linking matrix satisfy
\begin{equation}\label{eqcolseifert}
\forall \: 1\leq i \leq k: \quad  \!\! \sum_{\substack{j\text{ s.t.}\\ c(j)=c(i)}}\!\!\!\Lambda_{ij}=0
\end{equation}

\begin{definition}
	Given an $n$-colored link $L$, a representation $\alpha\colon H_1(X_L;\Z)\to U(1)$ is said to be \emph{colored} if it sends meridians of the same color to the same value, i.e. if 
	\[c(i)=c(j) \implies \alpha(\mu_i)= \alpha(\mu_j).\]
	Given an element $\omega=(\omega_1, \dotsc , \omega_n)\in \T^n$, the colored representation $\alpha$ defined by $\alpha(\mu_i):=\omega_{c(i)}$ is said to be the representation \emph{associated} to $\omega$.
\end{definition}
  It is immediate to see that the above association gives a natural bijection between elements $\omega=(\omega_1, \dotsc , \omega_n)\in \T^n$ and colored representations $\alpha\colon H_1(X_L;\Z)\to U(1)$.

\subsection{Multivariable signatures as rho invariants}\label{sstheoyl}
	As the untwisted signature of $W_F$ is $0$ (see for example \cite[Proposition~3.3]{cnt}), Proposition~\ref{propmultisign} is equivalent to the formula
	\[\sigma_L(\omega)= \sigma_\alpha(W_F) - \sigma(W_F).\]
	In particular, by the Atiyah-Patodi-Singer theorem, we have
	\begin{equation}\label{eqsignaturedefect}
	\sigma_L(\omega)= -\rho_\alpha(\partial W_F).
	\end{equation}
We will give now a more explicit description of $\partial W_F$, and see how to replace it with a manifold which is independent of the choice of $F$.

We start by recalling the following construction, which is a special case of plumbing. 
Let $\Gamma$ be a graph whose set of vertices is $\{1,\dotsc , n\}$ and such that:
\begin{itemize}
	\item each vertex $i$ is decorated by a compact, oriented surface $\Sigma_i$ (or, equivalently, by pair of natural numbers $[g_i,r_i]$ corresponding to the genus and number of boundary components of $\Sigma_i$);
	\item each edge is decorated by a number $\varepsilon =\pm 1$.
\end{itemize}
 In the following, we shall refer to a graph with the decorations described above as a \emph{plumbing graph}.
We construct then an oriented $3$-manifold $P_\Gamma$ by the following process.
     \begin{enumerate}
     \item For each edge with endpoints $i$ and $j$, remove a small open disk from $\Sigma_i$ and one from $\Sigma_j$.  Let $\Sigma_1', \dotsc , \Sigma_j'$ be the resulting surfaces.
  \item For each edge with endpoints $i$ and $j$ and decoration $\varepsilon=\pm1$, glue the $3$-manifolds $\Sigma_i'\times S^1$ and $\Sigma_j'\times S^1$ along the boundary components coming from the two corresponding removed disks, according to the diffeomorphism
  $\varphi \colon  S^1\times S^1 \to   S^1\times S^1 $
   given by $\varphi (x,y)= (y^{\varepsilon},x^{\varepsilon})$.
     \item Set 
     \[ P_\Gamma:=\Big(\bigsqcup_{i=1}^n \Sigma_i'\times S^1 \Big)\,/\!\sim,  \]
     where $\sim$ is the equivalence relation given by the above gluings.
     \end{enumerate}
This construction coincides with that of \cite[Section~4]{cnt}. With respect to the general plumbing construction \cite{neumann}, it corresponds to the special case of all Euler numbers equal to $0$, and our definition of plumbing graph also reflects this specialization. The boundary of $P_\Gamma$ is a disjoint union of $r=\sum_i r_i$ tori. These tori maintain a preferred product structure, and we can describe the boundary of $P_\Gamma$ as
\begin{equation}\label{eqboundaryplumbing}
\partial P_\Gamma = \bigsqcup_{s=1}^n\partial \Sigma_i \times S^1.
\end{equation}
As in \cite{cnt}, we give the following definition.
 \begin{definition}
Given a plumbing graph $\Gamma$, the \emph{total weight} of a pair of vertices $\{s,t\}$, denoted by $p_\Gamma(s,t)$, is the integer obtained as the sum of the $\pm 1$-decorations of all the vertices with endpoints $s$ and $t$. If all total weights of $\Gamma$ are $0$, the plumbing graph $\Gamma$ is called \emph{balanced}.
 \end{definition}

\begin{example}\label{explumbing}
	Let $F=F_1\cup\cdots \cup F_n\subseteq  D^4$ be a bounding surface for $L$.  The boundary of $W_F$ then is given up to orientation-preserving diffeomorphism as 
	\begin{equation}\label{eqexplumbing}
	\partial W_F=X_L\cup_\partial (-P_{\Gamma_F}),
	\end{equation}
	where $\Gamma_F$ is a graph whose vertices $\{1, \dotsc , n\}$ are decorated by the surfaces $F_i$'s, and whose edges correspond to the intersection points between them, with the sign of the intersection as decoration (see \cite[Example 4.12]{cnt}). Observe that the total weights of $\Gamma_F$ are given by
\[
	p_{\Gamma_F}(s,t)=F_s\cdot F_t = \lk (L_s,L_t).
\]
The boundary of $-P_{\Gamma_F}$, which can be described as in \eqref{eqboundaryplumbing}, is identified in the gluing \eqref{eqexplumbing} with $\partial X_L$ as follows. The boundary piece $-\partial F_s\times S^1$ is identified with the boundary tori of color $s$ in $\partial X_L$, in such a way that:
\begin{enumerate}
\item the classes of the $S^1$-factors are glued to the meridians;
\item the classes of the boundary components of $F_s$ are glued to the framed longitudes associated to the colored Seifert framing.
\end{enumerate}
\end{example} 
 
%

As we have seen in \eqref{eqsignaturedefect}, the multivariable signature of $L$ can be expressed up to sign as the rho invariant of $\partial W_F$. In turn, as it appears from Example \ref{explumbing}, $\partial W_F$ can be described as the union of $X_L$ with some plumbed $3$-manifold which depends on the choice of of a bounding surface for $L$. In the next construction, we will build a plumbing graph $\Gamma_L$ associated to $L$. The associated closed $3$-manifold $Y_L$, obtained by gluing $P_{\Gamma_L}$ with $X_L$, will play the role of $\partial W_F$ with the advantage of being unequivocally determined by the link $L$.
\begin{notation}
	Given a link $L$ in $S^3$, let $\abs{L}\in \N$ be its number of components.
\end{notation}
\begin{construction}\label{construction}
Given an $n$-colored link $L$, we construct the associated plumbing graph $\Gamma_L$ in the following way:
 \begin{itemize}
 	\item the set of vertices is $\{1,\dotsc n\}$, and the vertex $i$ is decorated by a genus-0 surface with $\abs{L_i}$ boundary components;
 	\item between each pair of distinct vertices $i,j$ there are exactly $\abs{\lk (L_i,L_j)}$ edges, and they are all decorated by $\varepsilon=\sign(\lk (L_i,L_j))$.
 \end{itemize}
In other words, we are plumbing spheres with the appropriate number of punctures along the smallest graph $\Gamma$ whose total weights satisfy the condition $p_{\Gamma}(i,j)= \lk(L_i,L_j)$. 
 Then, we form a closed $3$-manifold $Y_L$ as 
\[
Y_L:= X_L\cup_\partial(- P_{\Gamma_L}).
\]
The identification along the boundary is defined in the exact same way as the one arising in Example \ref{explumbing}, i.e. identifying the classes corresponding to the $S^1$-factors of $-\partial P_{\Gamma_L}$ with the meridians of $L$ of the appropriate color, and the classes corresponding to the boundary components of the punctured sphere with the longitudes associated to the colored Seifert framing.
\end{construction}


Using some of the ideas of \cite{cnt} together with the cut-and-paste formula of Section~2, we will now show that the multivariable signature can be written as the rho invariant of $Y_L$.

          \begin{proposition}\label{extheomultisign}
          Let $L$ be an $n$-colored link. Let $\omega\in \T_*^n$, and let $\alpha\colon H_1(X_L;\Z)\to U(1)$ be the associated colored representation. Then, $\alpha$ can be extended to a representation of $H_1(Y_L;\Z)$ and, for any choice of an extension, we have 
          \[ \rho_\alpha(Y_L) = - \sigma_L(\omega).\]
          \end{proposition}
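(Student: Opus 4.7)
The plan is to relate $\rho_\alpha(Y_L)$ to $\rho_\alpha(\partial W_F)$ for an arbitrary bounding surface $F$ of $L$ by applying the cut-and-paste formula (Theorem \ref{theocutandpaste}) along their common piece $X_L$, and then to identify $\rho_\alpha(\partial W_F) = -\sigma_L(\omega)$ via the Atiyah--Patodi--Singer theorem applied to $W_F$. The second identification is immediate: fix any bounding surface $F$; combining $\sigma(W_F) = 0$ (as noted just after \eqref{eqsignaturedefect}) with $\sigma_\alpha(W_F) = \sigma_L(\omega)$ from Proposition \ref{propmultisign} and Theorem \ref{theoapsrho}(ii) gives $\rho_\alpha(\partial W_F) = -\sigma_L(\omega)$. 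The task thus reduces to showing $\rho_\alpha(Y_L) = \rho_\alpha(\partial W_F)$.

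By Example \ref{explumbing} and Construction \ref{construction}, both $\partial W_F = X_L \cup_\Sigma (-P_{\Gamma_F})$ and $Y_L = X_L \cup_\Sigma (-P_{\Gamma_L})$ use the same identification along $\Sigma := \partial X_L$: meridians of $L$ to $S^1$-factors, colored Seifert longitudes to boundary circles of the punctured surfaces. Applying Theorem \ref{theocutandpaste} with $X_1 = X_L$, $X_2 = -P_{\Gamma_F}$, $X_0 = -P_{\Gamma_L}$ yields
\[\rho_\alpha(\partial W_F) = \rho_\alpha(Y_L) + \rho_\alpha(N) + C,\]
where $N := P_{\Gamma_L} \cup_\Sigma (-P_{\Gamma_F})$ and $C$ is the Maslov correction on $H_1(\Sigma)$. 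The extension of $\alpha$ to $\pi_1$ of each piece amounts to checking the colored Seifert framing compatibility \eqref{eqcolseifert}. The twisted part of $C$ vanishes by Remark \ref{remarknontrivial}, since $\omega \in \T_*^n$ makes $\alpha$ non-trivial on each toric component of $\Sigma$. The untwisted part vanishes because $V_{-P_{\Gamma_L}} = V_{-P_{\Gamma_F}}$ as subspaces of $H_1(\Sigma;\C)$: the kernel of the inclusion $H_1(\Sigma;\C) \to H_1(P_\Gamma;\C)$ depends only on the total edge weights $p_\Gamma(s,t) = \lk(L_s, L_t)$, which by construction agree for $\Gamma_L$ and $\Gamma_F$ (extra $H_1$-classes of $P_{\Gamma_F}$ from genus do not mix with the image of the boundary classes). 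Two coinciding Lagrangians force $\tau = 0$ by Proposition \ref{propmaslov}(i).

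The main obstacle is to show $\rho_\alpha(N) = 0$. I expect to reduce to the base case $\Gamma_F = \Gamma_L$ by choosing $F$ with minimum intersections between distinct $F_s, F_t$, namely exactly $|\lk(L_s,L_t)|$ transverse double points of consistent sign, so that $\Gamma_F$ and $\Gamma_L$ share their edge data and differ only in the genus of the vertex surfaces. Excess genus can then be peeled off one handle at a time by iterated applications of Theorem \ref{theocutandpaste}, at each step cutting along a torus $\gamma \times S^1 \subset \Sigma_s^F \times S^1$ for a non-separating curve $\gamma$; the modification introduces a surface-bundle-over-$S^1$ piece whose rho invariant vanishes by Proposition \ref{proprhoproduct}, while the Maslov corrections vanish for the same reason as above. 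When $\Gamma_F = \Gamma_L$, the manifold $N$ is the double of $P_{\Gamma_L}$ and bounds the cylinder $[0,1] \times P_{\Gamma_L}$, whose ordinary and twisted signatures both vanish by the reflection symmetry $(t,p) \mapsto (1-t,p)$; hence $\rho_\alpha(N) = 0$ by Theorem \ref{theoapsrho}(ii). Combining everything, $\rho_\alpha(Y_L) = \rho_\alpha(\partial W_F) = -\sigma_L(\omega)$.
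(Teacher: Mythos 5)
Your overall skeleton is the same as the paper's: identify $\rho_\alpha(\partial W_F)=-\sigma_L(\omega)$ via \eqref{eqsignaturedefect}, cut-and-paste $\partial W_F=X_L\cup_\partial(-P_{\Gamma_F})$ into $Y_L\sqcup N$ with $N=P_{\Gamma_L}\cup_\partial(-P_{\Gamma_F})$, and kill both Maslov corrections (the twisted one by Remark \ref{remarknontrivial}, the untwisted one because $V_{P_{\Gamma_L}}=V_{P_{\Gamma_F}}$ follows from $\ker\varphi=\ker\psi$). All of that matches the paper. Where you diverge is the remaining claim $\rho_\alpha(N)=0$: the paper observes that $N$ is itself a plumbed manifold $P_\Gamma$ along a \emph{balanced} graph and invokes \cite[Proposition~4.10]{cnt}, whereas you try to prove the vanishing from scratch by normalizing $F$ and compressing the genus of the $F_s$'s.

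That self-contained argument has a genuine gap at the genus-peeling step: you never check that the representation extends over the compressing piece, and in general it does not. For a non-separating curve $\gamma$ on $F_s$, the class of $\gamma$ (pushed to $\partial N(F_s)\subseteq P_{\Gamma_F}$) in $H_1(W_F;\Z)\cong\Z^n$ is $\sum_t (D_\gamma\cdot F_t)\,\mu_t$ for a surface $D_\gamma\subseteq D^4$ bounded by $\gamma$, so $\alpha([\gamma])=\prod_t\omega_t^{D_\gamma\cdot F_t}$, which is $\ne 1$ whenever a handle of $F_s$ links some $F_t$ and the $\omega_t$ are generic. Gluing a solid torus with meridian $\gamma$ then admits no extension of $\alpha$, and Theorem \ref{theocutandpaste} cannot be applied; choosing all compressing curves inside $\ker\alpha$ is not always possible once the genus gets small relative to $n$. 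Two further (lesser) problems: $\gamma\times S^1$ is a \emph{non-separating} torus in $N$ when $\gamma$ is non-separating, so the cut-and-paste formula, which requires a splitting $X_1\cup_\Sigma X_2$, does not apply as you state it (one must cut along the two boundary tori of a product neighborhood); and the assertion that the Maslov corrections at these steps vanish ``for the same reason as above'' is false — the three canonical Lagrangians in $H_1$ of the cutting tori are pairwise distinct there, and the vanishing, while true, requires an explicit computation showing the relevant Hermitian form is off-diagonal. The initial normalization of $F$ (exactly $\lvert\lk(L_s,L_t)\rvert$ double points of consistent sign, by tubing away cancelling pairs in $D^4$) and the final doubling argument are fine, but as written the reduction between them does not go through; the cleanest repair is exactly the paper's: recognize $N$ as $P_\Gamma$ for a balanced graph and use the known vanishing of rho for such plumbings.
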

 
     \begin{proof}
     Let $F=F_1\cup\cdots \cup F_n\subseteq  D^4$ be a bounding surface for $L$. Then, as $\alpha$ is a colored representation, it extends to a representation $H_1(W_F;\Z)\to U(1)$ that, for all color $s$, sends the meridian of $F_s$ to  $\omega_s\in U(1)\subseteq\{1\}$. By \eqref{eqsignaturedefect}, we have hence
  \begin{equation}  \label{eqcfcnt}
   \rho_\alpha(\partial W_F)= - \sigma_L(\omega). 
   \end{equation}
As we have seen in Example \ref{explumbing}, we have $\partial W_F=X_L\cup_\partial (-P_{\Gamma_F})$, where $\Gamma_F$ is a plumbing graph determined by $F$.
    We perform now cut-and-paste by replacing $P_{\Gamma_F}$ with $P_{\Gamma_L}$. Schematically, this is
    \begin{equation}\label{eqprooftheocf}
     X_L\cup_\partial (-P_{\Gamma_F}) \quad  \leadsto  \quad X_L\cup_\partial (-P_{\Gamma_L})  \:\:\sqcup \:\:P_{\Gamma_L} \cup_\partial(-P_{\Gamma_F}).
     \end{equation}
    The manifold  $P_{\Gamma_L}\cup_\partial (-P_{\Gamma_F})$ can be seen as the plumbing along the graph $\Gamma$ whose vertices $\{1, \dotsc , n\}$ are decorated by the closed surfaces $\Sigma_s\cup_\partial (-F_s)$ and whose set of edges is the union of all edges of $\Gamma_L$ and $\Gamma_F$, with the decorations of the edges of $\Gamma_F$ changed of sign. 
     In particular, \eqref{eqprooftheocf} can be rewritten as
    \begin{equation}\label{eqprooftheocf2}
    \partial W_F \quad  \leadsto  \quad Y_L  \:\:\sqcup \:\: P_\Gamma.
    \end{equation}
    Observe that, by construction, for each pair of vertices $\{s,t\}$ we have
    \begin{equation}\label{eqproofweights}
    p_{\Gamma_L}(s,t) =  p_{\Gamma_F}(s,t),
    \end{equation}
    so that 
 \[p_{\Gamma}(s,t)= p_{\Gamma_L}(s,t)- p_{\Gamma_F}(s,t)=0\]
 i.e. $\Gamma$ is balanced.
 
    We will now prove that the representation $\alpha$ can be extended to $H_1(Y_L;\Z)$. In the gluings, the boundary of $X_L$ is identified with the boundaries of $P_{\Gamma_F}$ and $P_{\Gamma_L}$, leading to natural maps 
    \begin{equation}\label{eqphipsi}
    \varphi\colon H_1(\partial X_L;\Z)\to H_1(P_{\Gamma_F};\Z) , \quad \psi\colon H_1(\partial X_L;\Z)\to H_1(P_{\Gamma_L};\Z) 
    \end{equation}
    induced by the inclusions.
Standard Mayer-Vietoris computations, together with the equality \eqref{eqproofweights},  show that $\ker \varphi$ and $\ker\psi$ coincide, as both are generated by the following elements (compare with \cite[Lemma~4.7]{cnt}):
\begin{enumerate}[(i)]
	\item the differences  $\mu_i-\mu_j$ with $c(i)=c(j)$;
	\item for each color $s$, the element 
	\[\sum_{c(i)=s}\!\!\lambda_i - \sum_{t\ne s}\lk(L_s,L_t)\mu_{j_t},\]
	where $\mu_{j_t}$ is any meridian of color $t$.
\end{enumerate}
As $\alpha$ extends to $H_1(W_F;\Z)$, it is also defined on $H_1(\partial W_F;\Z)=H_1(P_{\Gamma_F};\Z)$. In particular, $\alpha$ has to be trivial on $\ker \varphi$ (this can be verified using the explicit description of the generators). From the fact that $\ker \varphi=\ker \psi$, we see then that $\alpha$ also admits an extension to $H_1(P_{\Gamma_L};\Z)$, because $U(1)$ is divisible. 
%

Pick any extension $\alpha\colon H_1(P_{\Gamma_L};\Z)\to U(1)$ of $\alpha$.
    We can now apply the cut-and-paste formula of Theorem~\ref{theocutandpaste} to \eqref{eqprooftheocf2}, obtaining
     \begin{equation}\label{eqcfcutandpaste}
     \rho_\alpha(\partial W_F)=\rho_\alpha(Y_L) + \rho_\alpha(P_{\Gamma})- \tau( V_{P_{\Gamma_L}},V_{X_L},V_{P_{\Gamma_F}})\end{equation}
     (the Maslov triple index in twisted homology is $0$ because all $\omega_i$'s are non-trivial by assumption; see Remark~\ref{remarknontrivial}).
 As we have seen, the plumbing graph $\Gamma$ is balanced.
    As a consequence, we have $\rho_\alpha(P_{\Gamma})=0$
     by a computation of Conway, Nagel and the author \cite[Proposition~4.10]{cnt}.
The Lagrangians $V_{M_F}$ and $V_{P_{\Gamma_L}}$ are identified under the gluing to
\[V_{M_F}=\ker\varphi\otimes \C, \quad V_{P_\Gamma}=\ker \psi\otimes \C,\]
where $\varphi$ and $\psi$ are the maps defined in \eqref{eqphipsi}. From the fact that $\ker\varphi=\ker\psi$ it follows then that  $V_{M_F}=V_{P_\Gamma}$, and hence we have $\tau(V_{X_L},V_{P_{\Gamma_F}}, V_{P_{\Gamma_L}})=0$.
The equality \eqref{eqcfcutandpaste} gets thus rewritten as
\[\rho_\alpha(\partial W_F)=\rho_\alpha(Y_L).\]
Substituting this into \eqref{eqcfcnt}, the proof is complete.
     \end{proof}


\begin{remark}\label{remcoltocol}
	If $L$ is color-to-color algebraically split, then $Y_L$ has a very simple description. In fact, in this case $\Gamma_L$ is a graph with no edges, and thus the associated plumbed $3$-manifold is 
	\[P_{\Gamma_L}=\bigsqcup_{s=1}^n \Sigma_s\times S^1,\]
	where $\Sigma_s$ is a sphere with $\abs{L_s}$ punctures. The multivariable signature is then (up to sign) just the rho invariant of the closed $3$-manifold obtained by gluing these products $\Sigma_s\times S^1$ to the link exterior $X_L$. 
	This holds in particular if $L$ is $1$-colored. As a consequence, we can always express the Levine-Tristram signature of $L$ as
	\[\sigma_L(\omega)=-\rho_\alpha(X_L\cup_\partial (-\Sigma\times S^1)),\]
	where $\Sigma$ is a punctured sphere. In the case of a knot, this gives the well-known description of the Levine-Tristram signature as the rho invariant of the manifold obtained by $0$-framed surgery. In the next two sections, we will study the relationship between rho invariants and Dehn surgery in more generality.
\end{remark}

 \subsection{Integral surgery}\label{ssintegral}
 We will now study the value of the rho invariant of manifolds obtained by integer surgery on a link $L$.
 We start by recalling the following result of Cimasoni and Florens \cite[Theorem~6.7]{cf}.
 \begin{theorem}[Cimasoni-Florens]\label{theocf}
 	Let $L$ be a $k$-colored $k$-component link. Let $q\in \N$ a positive integer and let $n_1,\dotsc, n_k\in \{1,\dotsc , p-1\}$ be integers, each of which is coprime with $q$.  Let $\omega=(e^{2\pi in _1/q}, \dotsc , e^{2\pi i n_k/q})\in (S^1\setminus\{1\})^n$, and let $\alpha \colon H_1(X_L;\Z)\to U(1)$ be the associated colored representation. Let $g$ be a compatible integral framing on $L$. Then, we have
 	\[
 	\rho_\alpha(S_L(g))= -\sigma_L(\omega) +\sum_{i<j}\Lambda_{ij} +  \sign \Lambda_g  -\lmfrac{2}{q^2}\sum_{i=1}^k (q-n_i)n_j \Lambda_{ij}.
 	\]
 \end{theorem}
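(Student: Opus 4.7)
The plan is to deduce this formula directly from the original Cimasoni--Florens result, by translating the statement from the Casson--Gordon invariant $\sigma(\,\cdot\,,\alpha)$ into the Atiyah--Patodi--Singer rho invariant via Proposition~\ref{propcassgord}. The key observation is that the restrictive hypotheses of the theorem (representations given by $q$-th roots of unity, $q\in\N$) are there precisely to guarantee that $\alpha$ has finite image, which is exactly what is needed to pass between these two invariants.

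First I would verify that the finite-image hypothesis of Proposition~\ref{propcassgord} is satisfied. Since each $\omega_i = e^{2\pi i n_i/q}$ is a $q$-th root of unity, the colored representation $\alpha\colon H_1(X_L;\Z)\to U(1)$ factors through the finite cyclic subgroup $\mu_q\subset U(1)$. As $g$ is a compatible framing, $\alpha$ descends to a representation of $H_1(S_L(g);\Z)$, still with finite image. Proposition~\ref{propcassgord} then gives
\[
\rho_\alpha(S_L(g)) \;=\; -\sigma(S_L(g),\alpha).
\]

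Next, I would quote the original theorem of Cimasoni and Florens [cf, Theorem~6.7], which expresses $\sigma(S_L(g),\alpha)$ in terms of $\sigma_L(\omega)$ together with the correction terms $\sum_{i<j}\Lambda_{ij}$, $\sign\Lambda_g$, and the arithmetic term $\frac{2}{q^2}\sum_{i,j}(q-n_i)n_j\,\Lambda_{ij}$. Combining this with the identity above and performing a single sign flip yields the stated formula.

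The main obstacle here is not any depth of argument but careful bookkeeping of conventions. One must verify that the orientation chosen on $S_L(g)$, the sign conventions in the bordism-theoretic definition of $\sigma(\,\cdot\,,\alpha)$ recalled in Section~\ref{sscassgord}, and the normalization of the multivariable signature $\sigma_L$ all match between [cf] and the present paper. In particular, one should double-check that the framing conventions giving the linking matrix $\Lambda_g$ agree, and that the sign of $\sigma(\,\cdot\,,\alpha)$ as defined in \eqref{eqsigma} matches the one implicit in [cf, Theorem~6.7]. Once these conventions are aligned, no new topological input is required: the theorem is obtained by combining Proposition~\ref{propcassgord} with a direct quotation of the Cimasoni--Florens result.
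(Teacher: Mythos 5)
Your proposal matches the paper exactly: the paper does not reprove this result but simply cites \cite[Theorem~6.7]{cf} and, as stated in the remark following the theorem, translates it from the Casson--Gordon invariant $\sigma(\,\cdot\,,\alpha)$ into the rho invariant via Proposition~\ref{propcassgord}, which applies precisely because the $\omega_i$ are roots of unity and hence $\alpha$ has finite image. Your attention to the sign and convention bookkeeping is the right (and only) substantive concern in carrying this out.
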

 \begin{remark}The result of Cimasoni and Florens was originally written in terms of the Casson-Gordon invariant of Section~\ref{sscassgord}. We have translated it into a result about the rho invariant by using Proposition~\ref{propcassgord}.
 	\end{remark}
 
\begin{remark}Observe that formulas about any coloring of $L$ can be extracted from Theorem \ref{theocf}, as the signature function associated to any coloring can be easily deduced from the one associated to the maximal coloring \cite[Proposition~2.5]{cf}.
	For the $1$-coloring, a result of Casson and Gordon \cite[Lemma~3.1]{cassongordon2} about the Levine-Tristram signature can be obtained in this way.
	\end{remark}
We would like to remove the restrictions on the values of $\omega$ in the statement of Theorem \ref{theocf}. In order to be able to do so, we will impose some restrictions about the linking numbers between components of different colors.
 
 \begin{definition}
 	Let $L=K_1\cup\cdots \cup K_k$ be an $n$-colored link. Then
 	\begin{enumerate}[(i)]
 		\item if $\lk(L_s,L_t)=0$ for all pairs $(s,t)$ of distinct colors, we say that $L$ is \emph{color-to-color algebraically split};
 \item if every link component $K_i$ satisfies $\lk(K_i, L_s)=0$ for all $s\ne c(i)$,
 	we say that $L$ is \emph{component-to-color algebraically split}.
 \end{enumerate}
 \end{definition}
Of course, being component-to-color algebraically split is a stronger condition than being color-to-color algebraically split. 
Any $1$-colored link is component-to-color algebraically split, and so is any link with vanishing linking numbers, no matter what coloring it is assigned. The main result of this section is the following, which holds in the case of color-to-color algebraically split links.

 \begin{theorem}\label{theointeger}
 	Let $L$ be an $n$-colored link which is color-to-color algebraically split. Let $\omega\in \T_*^n$, and let $\alpha \colon H_1(X_L;\Z)\to U(1)$ be the associated colored representation. Let $g$ be a compatible integral framing on $L$. Then, we have
 	\[
 	\rho_\alpha(S_L(g))= -\sigma_L(\omega) +  \sign \Lambda_g -  2\sum_{s=1}^n h_s\theta_s(1-\theta_s),
 	\]
 	where, for each color $s$, $\theta_s\in (0,1)$ is such that $\omega_s=e^{2\pi i \theta_s}$, and $h_s$ is the sum of all the coefficients of the framed linking matrix  of the sublink $L_s$.
 \end{theorem}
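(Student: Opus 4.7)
The plan is to start from Proposition~\ref{extheomultisign}, which gives $\sigma_L(\omega) = -\rho_\alpha(Y_L)$, and deform $Y_L$ into $S_L(g)$ using the cut-and-paste formula of Theorem~\ref{theocutandpaste}. Under the color-to-color algebraically split hypothesis, Remark~\ref{remcoltocol} ensures that $Y_L = X_L \cup_\partial (-P_{\Gamma_L})$ with $P_{\Gamma_L} = \bigsqcup_{s=1}^n \Sigma_s \times S^1$, each $\Sigma_s$ a sphere with $\abs{L_s}$ punctures. I set $X_1 := X_L$, $X_2 := -P_{\Gamma_L}$, and $X_0$ equal to the disjoint union of solid tori attached via the integer framing $g$ (the representation $\alpha$ extends to $X_0$ by compatibility). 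Then $X_1 \cup_\partial X_0 = S_L(g)$.

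The first task is to identify the other piece $-X_0 \cup_\partial X_2$. It decomposes as $\bigsqcup_s Z_s$, where $Z_s$ is a closed $3$-manifold obtained by Dehn filling $-(\Sigma_s \times S^1)$ along all $\abs{L_s}$ boundary tori. Using the identifications of Example~\ref{explumbing}, the boundary classes of $\Sigma_s$ correspond to the colored Seifert longitudes, which in terms of the $g$-framed basis read $\lambda_i^g - d_i\mu_i$ with $d_i := \sum_{c(j)=c(i)} \Lambda_{g,ij}$; the meridian of the $i$-th solid torus then becomes $d_i\cdot(\text{fiber}) + (\text{boundary of }\Sigma_s)$ on the corresponding boundary torus. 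A direct Mayer--Vietoris computation yields $H_1(Z_s;\Z) = \Z/h_s$ with $h_s = \sum_{c(i)=s} d_i = \sum_{c(i)=c(j)=s}\Lambda_{g,ij}$, and recognizing the Seifert-fibered $S^1$-bundle over $S^2$ identifies $Z_s$ with the lens space $L(h_s, 1)$. Summing the compatibility conditions on $g$ over $c(i) = s$ and invoking color-to-color splitness ($\lk(L_s, L_t) = 0$ for $s \neq t$) yields $\omega_s^{h_s} = 1$, so $\alpha$ extends to $Z_s$, and Corollary~\ref{corlensinteger} gives $\rho_\alpha(Z_s) = 2h_s\theta_s(1-\theta_s) - \sgn(h_s)$.

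The hard part is computing the Maslov correction term of the cut-and-paste formula. Since every $\omega_s$ is non-trivial, the twisted Maslov triple index vanishes by Remark~\ref{remarknontrivial}, and the correction reduces to $-\tau(V_{X_0}, V_{X_L}, V_{-P_{\Gamma_L}})$ computed in $H_1(\partial X_L;\C)$. Identifying $V_{X_0}$ with the framed longitude Lagrangian $\LL_g$ from Lemma~\ref{lemmasymplecticlink}, I apply the cocycle equation of Proposition~\ref{propmaslov}(ii) with the meridian Lagrangian $\MM$ as auxiliary to obtain
\[\tau(\LL_g, V_{X_L}, V_{-P_{\Gamma_L}}) = \tau(\MM, \LL_g, V_{X_L}) - \tau(\MM, \LL_g, V_{-P_{\Gamma_L}}) + \tau(\MM, V_{X_L}, V_{-P_{\Gamma_L}}).\]
The first term equals $\sign \Lambda_g$ by Lemma~\ref{lemmasymplecticlink}(iii). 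The second splits along $V_{-P_{\Gamma_L}} = \bigoplus_s V_{\Sigma_s \times S^1}$ with $V_{\Sigma_s \times S^1}$ generated by the $k_s-1$ fiber-differences $\mu_{i_1} - \mu_{i_j}$ and by $\sum_{c(i)=s}(\lambda_i^g - d_i\mu_i)$, and a direct evaluation of the defining Hermitian form of the Maslov index in this basis produces a rank-one form of sign $\sgn(h_s)$, summing to $\sum_s \sgn(h_s)$. The crucial step, which essentially uses color-to-color splitness, is showing the third term vanishes: writing each generator of $V_{-P_{\Gamma_L}}$ as $a_\MM + a_{V_{X_L}}$, a cancellation within color $s$ (coming from $\sum_{c(i)=s} d_i = h_s$) leaves the $\MM$-component equal to $\sum_{c(j)\neq s}\lk(L_s, K_j)\,\mu_j$, and cross-color pairings of these contributions are proportional to the total linking numbers $\lk(L_s, L_t)$, which vanish by assumption.

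Assembling, the cut-and-paste formula reads
\[\rho_\alpha(Y_L) = \rho_\alpha(S_L(g)) + \sum_s\rho_\alpha(Z_s) - \sign\Lambda_g + \sum_s\sgn(h_s);\]
substituting $\rho_\alpha(Y_L) = -\sigma_L(\omega)$ and the lens space values, and solving for $\rho_\alpha(S_L(g))$, yields the formula of the theorem, with the two $\sgn(h_s)$ contributions cancelling.
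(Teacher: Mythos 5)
Your proof is correct and follows essentially the same route as the paper: the same cut-and-paste involving $X_L$, the $g$-framed solid tori, and $P_{\Gamma_L}$ (merely read off from $Y_L$ rather than from $S_L(g)$, which is the identical decomposition), the same identification of the by-product pieces with lens spaces $L(h_s,1)$, and the same cocycle trick with the meridian Lagrangian $\MM$ yielding the three Maslov indices $\sign\Lambda_g$, $\sum_s\sgn(h_s)$, and $0$. All your intermediate values agree with the paper's Claims 1--4, and the final cancellation of the $\sgn(h_s)$ terms is as in the paper.
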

 \begin{proof}
 	Let $k$ be the number of components of $L$.
 	We perform the cut-and-paste illustrated schematically by 
 	\begin{equation}\label{eqprooftheointegerboh}
 	X_L\cup_\partial Y \quad \leadsto \quad  X_L\cup_\partial (-P_{\Gamma_L}) \:\:\sqcup \:\: P_{\Gamma_L}\cup_\partial Y, 
 	\end{equation}
 	where 
 	$Y=Y_1\sqcup \cdots \sqcup Y_k$
 	 is a disjoint union of $k$ solid tori, glued along the framing $g$, and $P_{\Gamma_L}$ is glued as prescribed by Construction \ref{construction}. 
 	As $L$ is color-to-color algebraically split, by Remark~\ref{remcoltocol} we have
 	\[P_{\Gamma_L}=\bigsqcup_{s=1}^n \Sigma_s\times S^1,\]
 	where, for each color $s$, $\Sigma_s$ is a sphere with $\abs{L_s}$ punctures. 
The manifold $P_{\Gamma_L}\cup_\partial Y$ is thus the disjoint union of the $n$ closed manifolds obtained by capping all of the $\Sigma_s\times S^1$'s appropriately with solid tori. 
\begin{claim} Up to an orientation-preserving diffeomorphism, we have
	\[P_{\Gamma_L}\cup_\partial Y= -\bigsqcup_{s=1}^n   L(h_s,1),\]
	in such a way that the element $1\in \Z/h_s=\pi_1(L(h_s,1))$ is given by the class $[S^1]\in H_1(\Sigma_s\times S^1;\Z)$.
\end{claim}
\noindent
We postpone for the moment the proof of this claim.
 	As a consequence of Claim~1, \eqref{eqprooftheointegerboh} can be rewritten up to orientation-preserving diffeomorphism as
 	\[S_L(f_L) \quad  \leadsto  \quad Y_L  \:\:\sqcup \:\: -\bigsqcup_{s=1}^n L(h_s,1). \]
 	Observe that the restriction of $\alpha$ to $H_1(\Sigma_s\times S^1)$ sends the class $[S^1]$ to $\omega_s$, as the boundary circles of the form $\{p_i\}\times S^1$ are identified with the meridians of the link. The representation $\alpha$ extends thus to $L(h_s,1)$ in such a way that, for each color $s$, the element $1\in \Z/h_s=H_1(L(h_s;\Z))$ is sent to $\omega_s$.
We can now apply Theorem~\ref{theocutandpaste} and get
 	\begin{equation}\label{eqproofnp}
 	\rho_\alpha(S_L(f_L)) =\rho_\alpha(Y_L)- \sum_{s=1}^n\rho (L(h_s,1),\omega_s) - \tau(V_P, V_{X_L}, V_Y) 
 	\end{equation}
 	(as usual, there is no Maslov triple index in twisted cohomology because of Remark~\ref{remarknontrivial}).
 	The first summand in the right-hand term of \eqref{eqproofnp} is minus the multivariable signature of $L$ thanks to Proposition~\ref{extheomultisign}. Using Corollary~\ref{corlensinteger} to describe the rho invariant of the lens spaces at hand, and swapping  the second and third variables of the Maslov triple index (see Proposition \ref{propmaslov} (i)), we can rewrite \eqref{eqproofnp} as 
 	\begin{equation}\label{eqproofnp'}
 \rho_\alpha(S_L(f_L)) =-\sigma_L(\omega)- \sum_{s=1}^n (2h_s\theta_s(1-\theta_s)- \sgn(h_s)) + \tau(V_P, V_Y, V_{X_L}). 
 	\end{equation}
 	Hence, in order to conclude, we need to identify the Maslov triple index term in \eqref{eqproofnp'}. The rest of the proof is devoted to this.
 	
 	Instead of trying to calculate the Maslov triple index directly, we use the cocycle property of our three Lagrangians together with $\MM:=\Span_\C\{\mu_1,\dotsc , \mu_k\}$ to simplify this task. Namely, using Proposition \ref{propmaslov} (ii), we find
 \begin{equation}\label{eqproofintegermaslov} 
 	\tau(V_P, V_Y, V_{X_L}) = \tau (\MM,V_P, V_Y)- \tau(\MM, V_P, V_{X_L}) + \tau(\MM, V_Y, V_{X_L}).
 	\end{equation}
 We set the following notation. Let $\lambda_1,\dotsc , \lambda_k$ be the framed longitudes associated to the surgery framing $g=(g_1,\dotsc, g_k)$, and let $\lambda_1,\dotsc , \lambda_k$ bet the framed longitudes associated to the colored Seifert framing $f_L=(f_1,\dotsc, f_k)$. By definition, then, we have 
 \begin{equation}\label{eqprooflambda'}
 \lambda_i'=\lambda_i + (f_i-g_i)\mu_i \quad \text{for all } i=1,\dotsc , k.
 \end{equation}
 We also use the notation $\Lambda_{ij}$ for the coefficients of the framed linking matrix $\Lambda_g$, and $\Lambda_{ij}'$ for those of the frame linking matrix $\Lambda_{f_L}$ (these two matrices only differ on the diagonal).
 We have the following description of the four Lagrangian subspaces appearing in the right-hand term of \eqref{eqproofintegermaslov}:
 	\begin{equation*}
 \begin{split}
 \MM&=\Span_\C\{\mu_1,\dotsc , \mu_k\},\\
 V_P&=\Span_\C\{\mu_i-\mu_j\, \:\vert \: c(i)=c(j)\} \oplus \Span_\C\{ v_1, \dotsc , v_n\}, \quad \text{where } v_s=\sum_{c(i)=s}\lambda_i',\\
 V_Y&=\Span_\C\{\lambda_1,\dotsc, \lambda_k\}
 ,\\
 V_{X_L}&= \Span_\C\{w_1, \dotsc, w_k\}, \quad \text{where } w_i=\lambda_i - \sum_{j=1}^k\Lambda_{ij} \mu_j= \lambda_i' - \sum_{j=1}^k\Lambda_{ij}' \mu_j.
 \end{split}
 \end{equation*}
 We compute now the three summands separately. We will prove the following.
 	\begin{claim}
 		$\tau (\MM,V_P, V_Y)=-\sum_{s=1}^n\sgn(h_s)$.
 		\end{claim}
  	 	\begin{claim}
 	$\tau(\MM, V_P, V_{X_L})=0$.
 \end{claim}
 	 	\begin{claim}
 		$\tau(\MM, V_Y, V_{X_L})=\sign(\Lambda_g)$.
 	\end{claim}
 
These three claims, together with \eqref{eqproofnp'} and \eqref{eqproofintegermaslov}, lead to the desired formula. In order to conclude, we are only left with proving Claim 1 to 4.
 	
\begin{claimproof}2.
 	Write 	$\tau (\MM,V_P, V_Y)=\tau (V_Y,\MM,V_P)$. Clearly, we have 
 	\[(V_Y +\MM)\cap V_P=V_P.\] 
 	In fact, the generators of $V_P$ of the form $\mu_i-\mu_j$ are in $\MM$, and the generators $v_s$  
 	can be written as
 	\begin{equation}\label{eqproofclaim}
 	v_s= \sum_{c(i)=s}\lambda_i + \sum_{c(i)=s}(f_i-g_i)\mu_i,
 	\end{equation}
 	where the first summand is in $V_Y$, and the second summand is in $\MM$. 
 	Let 
 	\[\psi\colon V_P\times V_P \to \C\]
 	 be the Hermitian form associated to the triple $(V_Y,\MM,V_P)$, whose signature is $\tau (V_Y,\MM,V_P)$. The generators of the form $\mu_i-\mu_j$ are clearly in the radical of $\psi$, as they belong to the Lagrangian $\MM$. As a consequence, it is enough to study $\psi$ on the span of $v_1, \dotsc , v_n$. Using the definition of $\psi$ and the decomposition \eqref{eqproofclaim}, we compute
 	\[\psi(v_s,v_t)=
 	\Big(\sum_{c(i)=s}\lambda_i\Big)\cdot \Big(\sum_{c(j)=t}(f_j-g_j)\mu_j\Big)=
 	\begin{cases}
 	\displaystyle \sum_{c(i)=s}(f_i-g_i), &\text{if } s=t,\\
 	0, &\text{otherwise}.
 	\end{cases}\]
 	As a consequence, we have 
 	\[\sign \psi = \sum_{s=1}^n\sgn \Big(\sum_{c(i)=s}(f_i-g_i) \Big).\]
 	By definition of the colored Seifert framing, together with the fact that $g_i=\Lambda_{ii}$, we can now compute that, for each color $s$, we have 
 	\[\sum_{c(i)=s}(f_i-g_i)=\sum_{c(i)=s}\Big(-\sum_{\substack{c(j)=s\\j\ne i}}\lk(K_i,K_j)\Big)-\sum_{c(i)=s}g_i= - \!\!\!\!\!\!\sum_{c(i)=c(j)=s} \!\!\!\!\!\! \Lambda_{ij}=-h_s.\]
Putting these computations together, we find the equation in the statement of the claim.
\end{claimproof}

 \begin{claimproof}3.
 	The space $(\MM+V_P)\cap V_{X_L}$ is the $n$-dimensional subspace generated by the terms
\begin{equation}\label{eqproofclaim2}
 		z_s:=\sum_{c(i)=s}w_i = -\sum_{c(i)=s}\sum_{j=1}^k\Lambda_{ij}'\mu_j + \sum _{c(i)=s}\lambda_i'
 		\end{equation}
 where the first summand is in $\MM$ and the second summand is in $V_P$.
 Let 
 \[\varphi \colon \Span_\C\{z_1, \dotsc , z_n\} \times \Span_\C\{z_1, \dotsc , z_n\} \to \C\]
be the Hermitian form associated to the triple $(\MM, V_P, V_{X_L})$. Then, from the decomposition \eqref{eqproofclaim2} we can compute
\[\varphi(z_s,z_t)= \Big( -\sum_{c(i)=s}\sum_{j=1}^k\Lambda_{ij}'\mu_j \Big)\cdot \Big(\sum _{c(i)=t}\lambda_i'\Big) =-\sum_{\substack{c(i)=s\\c(j)=t}} \Lambda_{ij}'.
\]
For $s=t$, this is $0$ by definition of the colored Seifert framing. For $s\ne t$, instead, it is equal to $\lk(L_s,L_t)$, which is $0$ because the link is color-to-color algebraically split. 
In particular, the form $\varphi$ is trivial and the Maslov triple index is $0$ as claimed.
 \end{claimproof}
 	
\begin{claimproof}4.
	This follows immediately from Lemma~\ref{lemmasymplecticlink} (iii), as $V_Y=\LL_g$.
\end{claimproof}

\begin{claimproof}1.
	As we have observed, $P_{\Gamma_L}$ is the disjoint union of manifolds of the form $\Sigma_s\times S^1$, where $\Sigma_s$ is a punctured sphere.
	By construction, in the gluing $X_L\cup_\partial Y$, the meridian $m_i$ of the solid torus $Y_i$ is identified with the framed longitude $\lambda_i$ of the surgery framing $g$.
	On the other hand, when $-P_{\Gamma_L}$ is glued to $X_L$, a boundary component $C_i\times S^1\subseteq \partial \Sigma_s\times S^1$ is identified with the boundary torus $K_i$ in such a way that, homologically, the class $[C_i]$ coincides with the framed longitude $\lambda_i'$ of the colored Seifert framing, and
	$[S^1]$ coincide with the meridian $\mu_i$.
	 The  ``by-product'' gluing $P_{\Gamma_L}\cup_\partial Y$ is the result of capping the boundary $\Sigma_s\times S^1$ with solid tori.
	As a consequence of these and \eqref{eqprooflambda'}, these cappings are given by the identifications
	\[m_i = \lambda_i = \lambda_i'+(g_i-f_i)\mu_i =[C_i]+(g_i-f_i)[S^1]. \]
	In particular, for each $s$, this construction leads to the lens space $L(-h'_s, 1)$ or equivalently to $-L(h_s',1)$, with 
	\[h'_s:= \sum_{c(i)=s} (g_i-f_i).\]
	It is also easy to see verify that the element $1\in \Z/h_s'$ corresponds to $[S^1]$ as desired.
	The proof is concluded by proving that $h_s'=h_s$. This follows from the definition of the colored Seifert framing and the sequence of equalities
	\[\sum_{c(i)=s} (g_i-f_i)= \sum_{c(i)=s}g_i + \sum_{c(i)=s}\sum_{\substack{\:j\ne i \text{ s.t.\ }\\c(j)=s}} \lk(K_i,K_j)=\!\!\!\!\! \sum _{c(i)=c(j)=s}\!\!\!\!\!\Lambda_{ij}=h_s.\]
\end{claimproof}
 	\end{proof}

 \begin{remark}
 Applying Theorem~ \ref{theointeger} in the $1$-colored setting, where the first hypothesis is always satisfied, we get the following formula relating the rho invariant of the manifold obtained by surgery and the Levine-Tristram signature:
 	\begin{equation}\label{eqcg}
 	\rho_\alpha(S_L(g))= -\sigma_L(\omega) +  \sign \Lambda_g -  2 \Big(\sum_{i,j}\Lambda_{ij}\Big)\theta(1-\theta),
 	\end{equation}
where $\theta\in (0,1)$ is such that $\omega=e^{2\pi_i\theta}$ and the sum is over the coefficients of $\Lambda_g$. For $\theta\in \Q$, this coincides with a formula of Casson and Gordon \cite[Lemma~3.1]{cassongordon2}. 
 \end{remark}

\begin{example}
	Let $r,s$ be positive coprime integers, and let $T(r,s)$ denote the $(r,s)$-torus knot. 
	The	$(rs-1)$-Dehn surgery on $T(r,s)$ gives a manifold which is orientation-preserving diffeomorphic to the lens space $L(rs-1,s^2)$ \cite[Proposition~3.2]{moser}. 
	Let $\zeta:=e^{2\pi i/(rs-1)}$, and let $0\leq k \leq rs-2$.
	Keeping track of the induced map on the fundamental group under this diffeomorphism, by \eqref{eqcg} (applied with $\omega=\zeta^k$) we obtain
	\[\rho(L(rs-1, s^2), \zeta^{krs^2})= -\sigma_{T(r,s)}(\zeta^{k}) +1 -\lmfrac{2k(rs-1-k)}{rs-1}.\]
	It might be interesting to compare this formula to other known computations for the Levine-Tristram signature of torus knots (see e.g.\ the paper of Borodzik and Oleszkiewicz \cite{maciej}).
\end{example}

Suppose now that the $n$-colored link $L$ is component-to-color algebraically split. Observe that, under this assumption, the colored Seifert framing coincides with the usual Seifert framing, i.e.\ with the colored Seifert framing associated to the $1$-coloring of the same underlying link. As explained by the next result, this framing has the important property of being compatible with all $U(1)$-representations of $H_1(X_L;\Z)\to U(1)$.
 
\begin{lemma}\label{lemmaseifert}
Let $L$ be an $n$-colored link which is component-to-color algebraically split. Then, the colored Seifert framing $f_L$ is compatible with all colored representations $\alpha\colon H_1(X_L;\Z)\to U(1)$.
\end{lemma}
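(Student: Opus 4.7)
The plan is to verify compatibility of $f_L$ with any colored representation $\alpha$ directly, using the characterization of compatibility given in Remark~\ref{remcompatible}. Namely, for an integer framing $g=(g_1,\dotsc,g_k)$ (so each $q_i=1$), compatibility with $\alpha$ reduces to the system of equations
\[
\prod_{j=1}^k \alpha(\mu_j)^{\Lambda_{ij}}=1 \qquad\text{for all } i=1,\dotsc,k,
\]
where $\Lambda=\Lambda_g$. Since $\alpha$ is colored, $\alpha(\mu_j)=\omega_{c(j)}$ depends only on the color, so the left-hand side may be regrouped as $\prod_{s=1}^n \omega_s^{A_{i,s}}$ with
\[
A_{i,s}:=\sum_{c(j)=s}\Lambda_{ij}.
\]
Thus it suffices to show $A_{i,s}=0$ for every $i$ and every color $s$, since then each factor $\omega_s^{A_{i,s}}=1$ regardless of the value of $\omega_s$.

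I would then split into the two cases $s=c(i)$ and $s\ne c(i)$. For $s=c(i)$, I isolate the diagonal entry and use the definition of the colored Seifert framing in Definition~\ref{defcolseifert}:
\[
A_{i,c(i)}=\Lambda_{ii}+\!\!\!\!\sum_{\substack{j\ne i\\ c(j)=c(i)}}\!\!\!\!\Lambda_{ij}=f_i+\!\!\!\!\sum_{\substack{j\ne i\\ c(j)=c(i)}}\!\!\!\!\lk(K_i,K_j)=0,
\]
which is exactly \eqref{eqcolseifert}. For $s\ne c(i)$, every index $j$ summed over satisfies $j\ne i$, so the entries are off-diagonal and
\[
A_{i,s}=\sum_{c(j)=s}\lk(K_i,K_j)=\lk(K_i,L_s)=0
\]
by the assumption that $L$ is component-to-color algebraically split.

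Combining the two cases yields $A_{i,s}=0$ for all $i$ and $s$, hence the compatibility equations hold identically, independently of $\omega\in\T^n$. This completes the verification. There is essentially no obstacle here: the lemma is a bookkeeping statement whose entire content is that the component-to-color splitting hypothesis is precisely what is needed to kill the ``cross-color'' sums $A_{i,s}$ for $s\ne c(i)$, while the colored Seifert framing is designed to kill the ``same-color'' sums $A_{i,c(i)}$.
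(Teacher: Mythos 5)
Your argument is correct and follows essentially the same route as the paper: both reduce compatibility to the equations $\prod_j\alpha(\mu_j)^{\Lambda_{ij}}=1$ via Remark~\ref{remcompatible}, regroup the product by colors, kill the cross-color exponents using the component-to-color splitting hypothesis, and kill the same-color exponent using \eqref{eqcolseifert}. No issues.
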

\begin{proof}
By 	Remark~\ref{remcompatible}, we need to prove that 
		\begin{equation}\label{eqprooflemmaseifert}
	\prod_{j=1}^k\alpha(\mu_j)^{\Lambda_{ij}}=1 \quad \quad \text{for all } i.
	\end{equation}
	Let $\omega\in \T^n$ be the element determined by the relations $\alpha(\mu_i)=\omega_{c(i)}$ for all $i$. 
We can write then	
\begin{equation}\label{eqprooflemmaseifert2}
\prod_{j=1}^k\alpha(\mu_j)^{\Lambda_{ij}}=\prod_{s=1}^n \omega_s^{\sum_{c(j)=s}\Lambda_{ij}}.
\end{equation}
	Since $L$ is component-to-color algebraically split, for every $s$ different from $s_i:=c(i)$ we have 
	\[\sum_{c(j)=s}\Lambda_{ij}=\lk(K_i,L_s)=0.\]
As a consequence, \eqref{eqprooflemmaseifert2} can be rewritten as 
\begin{equation}\label{eqprooflemmaseifert3}
\prod_{j=1}^k\alpha(\mu_j)^{\Lambda_{ij}}= \omega_{s_i}^{\sum_{c(j)=s_i}\Lambda_{ij}}.
\end{equation}
By \eqref{eqcolseifert}, moreover, 
the exponent in the right-hand term of \eqref{eqprooflemmaseifert3} is $0$, and thus \eqref{eqprooflemmaseifert} is satisfied. 
\end{proof}

\begin{corollary}\label{corseifert}
		Let $L$ be an $n$-colored link which is component-to-color algebraically split. Let $\omega\in \T_*^n$, and let $\alpha \colon H_1(X_L;\Z)\to U(1)$ be the associated colored representation. Then, $\alpha$ extends to $H_1(S_L(f_L);\Z)$ and we have
	\[
	\rho_\alpha(S_L(f_L))= -\sigma_L(\omega) +  \sign \Lambda_{f_L}.
	\]
\end{corollary}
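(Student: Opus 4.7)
The plan is to deduce this as a direct consequence of Theorem~\ref{theointeger} by verifying the two hypotheses and showing that the correction term $2\sum_{s=1}^n h_s\theta_s(1-\theta_s)$ vanishes. First, note that being component-to-color algebraically split is a strictly stronger condition than being color-to-color algebraically split (taking the total linking with each sublink $L_s$ decomposes as a sum of component-to-color linking numbers), so Theorem~\ref{theointeger} is indeed applicable as soon as we establish compatibility of $f_L$ with $\alpha$. The latter is precisely the content of Lemma~\ref{lemmaseifert}, which moreover tells us that $\alpha$ extends to $H_1(S_L(f_L);\Z)$.

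With both hypotheses in place, I would invoke Theorem~\ref{theointeger} with $g = f_L$ to obtain
\[
\rho_\alpha(S_L(f_L)) = -\sigma_L(\omega) + \sign \Lambda_{f_L} - 2\sum_{s=1}^n h_s \theta_s(1-\theta_s),
\]
so everything reduces to showing that $h_s = 0$ for every color $s$. Recall $h_s = \sum_{c(i)=c(j)=s}\Lambda_{ij}$ where $\Lambda$ is the framed linking matrix of $f_L$. This is exactly the content of \eqref{eqcolseifert}: for each fixed $i$ with $c(i) = s$, the row sum $\sum_{j:c(j)=s}\Lambda_{ij}$ vanishes by the very definition of the colored Seifert framing (on the diagonal, the entry $f_i$ is designed to cancel the sum of the off-diagonal linking numbers within the same color class). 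Summing over all $i$ with $c(i) = s$ gives $h_s = 0$.

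Substituting $h_s = 0$ into the formula from Theorem~\ref{theointeger} immediately yields the statement. There is no real obstacle here: the work was done in Theorem~\ref{theointeger} and Lemma~\ref{lemmaseifert}, and the corollary is essentially a bookkeeping observation that the Seifert framing is precisely the integer framing for which the lens-space correction terms degenerate. The only mild subtlety is making sure that the component-to-color hypothesis is used in both places it is needed: it supplies compatibility (via Lemma~\ref{lemmaseifert}, which genuinely requires the stronger assumption) and it automatically gives the color-to-color split hypothesis of Theorem~\ref{theointeger}.
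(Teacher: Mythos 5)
Your proposal is correct and follows exactly the paper's own argument: extension via Lemma~\ref{lemmaseifert}, application of Theorem~\ref{theointeger} with $g=f_L$, and vanishing of each $h_s$ via \eqref{eqcolseifert}. The additional remark that the component-to-color hypothesis implies the color-to-color hypothesis of Theorem~\ref{theointeger} is a point the paper leaves implicit, but it is the same proof.
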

\begin{proof}
The representation $\alpha$ extends to $H_1(S_L(f_L);\Z)$ thanks to Lemma~\ref{lemmaseifert}, so that we can apply Theorem~\ref{theointeger}. The desired formula follows then from the observation that, for the colored Seifert framing, thanks to \eqref{eqcolseifert} for any color $s$ we have
\[h_s\stackrel{\text{def}}{=}\sum_{c(i)=c(j)=s}\Lambda_{ij}=\sum_{c(i)=s} \sum_{c(j)=s}\Lambda_{ij}=\sum_{c(i)=s}0=0.\]
\end{proof}

\begin{remark}
	Both Lemma~\ref{lemmaseifert} and Corollary~\ref{corseifert} hold in particular in the $1$-colored setting, where they were proved by Nagel and Powell \cite[Section~5]{nagelpowell}. Our work is a generalization of this to the multivariable setting.
\end{remark}
 
 \subsection{Rational surgery}\label{ssrational}

 We now want to study the rho invariant of the closed manifold obtained by surgery along a rational framing on a link. We start from the case of a knot, as the statement and the proof are a bit simpler in this setting.
 Observe that, for a knot $K$, a representation $\psi\colon H_1(X_K)\to U(1)$, extends to $M_K(p/q)$ if and only if $\psi(\mu)^p=1$, i.e.\ if and only if $\omega:=\psi(\mu)$ is a $p$\textsuperscript{th} root of unity.
 
 \begin{proposition}\label{propdehnknot}
 	Let $K$ be a knot, let $\omega$ be a $p$\textsuperscript{th} root of unity, and let $\alpha\colon H_1(X_K;\Z)\to U(1)$ be the representation defined by $\alpha(\mu)=\omega$. Then, we have
 	\[\rho_\alpha(S_K(p/q))= -\sigma_K(\omega) - \rho(L(p,q),\omega).\]
 \end{proposition}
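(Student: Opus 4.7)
The plan is to apply the cut-and-paste formula (Theorem~\ref{theocutandpaste}) to relate $S_K(p/q)$ to the $0$-surgery $S_K(0)$, whose rho invariant is already described by Proposition~\ref{extheomultisign}. For a knot, the plumbing graph $\Gamma_K$ consists of a single vertex decorated by a disk and no edges, so $P_{\Gamma_K}$ is a solid torus $W$ glued to $X_K$ with its meridian identified to the Seifert longitude $\lambda^s$; in particular, $Y_K = X_K \cup_\partial (-W)$ is precisely $S_K(0)$, so Proposition~\ref{extheomultisign} specializes to $\rho_\alpha(S_K(0)) = -\sigma_K(\omega)$.

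Next, I would write $S_K(p/q) = X_K \cup_\partial V$, with $V$ a solid torus whose meridian is glued to $p\mu + q\lambda^s$, and apply Theorem~\ref{theocutandpaste} with $X_1 = X_K$, $X_0 = W$, $X_2 = V$. This produces
\[
\rho_\alpha(S_K(p/q)) = \rho_\alpha(S_K(0)) + \rho_\alpha(N) + C,
\]
where $N = (-W) \cup_\partial V$ is obtained by gluing two solid tori along a torus, with meridians $\lambda^s$ and $p\mu + q\lambda^s$ respectively, and $C$ is the Maslov correction. Taking the generator of $\pi_1(-W)$ to be $\mu$ and comparing with the presentation of lens spaces from Section~\ref{sslens}, a direct computation of meridians identifies $N$ with $L(p,-q) \cong -L(p,q)$, whence $\rho_\alpha(N) = -\rho(L(p,q),\omega)$. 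The induced representation on $\pi_1(N) = \mathbb{Z}/p$ sends the standard generator to $\omega$, which is consistent since $\omega^p = 1$ is exactly the compatibility of $p/q$ with $\alpha$.

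Finally, I need to check that $C$ vanishes. In $H_1(\partial X_K;\mathbb{C})$ with basis $\{\mu,\lambda^s\}$, the canonical Lagrangians $V_{X_K}$ and $V_W$ both equal $\Span_\mathbb{C}\{\lambda^s\}$, so the untwisted triple Maslov index vanishes by Proposition~\ref{propmaslov}(i). For the twisted contribution, assume $\omega \neq 1$ (the case $\omega = 1$ is trivial); then $H_1(\partial X_K;\mathbb{C}_\alpha) = 0$ by Remark~\ref{remarknontrivial}, so the twisted Maslov index also vanishes. Combining everything gives the stated formula. The main delicate point will be correctly identifying the orientation of $N$ as $-L(p,q)$ under the paper's conventions for lens spaces; the remaining ingredients follow directly from the tools developed earlier in the paper.
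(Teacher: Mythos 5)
Your proposal is correct and follows essentially the same route as the paper: cut out the $p/q$-filling solid torus, replace it with a $0$-framed one to produce $S_K(0)\sqcup -L(p,q)$, identify $\rho_\alpha(S_K(0))=-\sigma_K(\omega)$ via Proposition~\ref{extheomultisign}, and kill the Maslov correction by noting $V_{X_K}=V_W=\Span_\C\{\lambda^s\}$ together with Remark~\ref{remarknontrivial}. The orientation bookkeeping you flag as delicate is handled in the paper exactly as you anticipate, via the convention $L(p,-q)\cong -L(p,q)$ from Section~\ref{sslens}.
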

 \begin{proof}
 	We perform cut-and-paste on $S_K(p/q)$ in the following way: we cut out the solid torus $Y=D^2\times S^1$ of the filling, and we replace it with another copy $Y'=D^2\times S^1$, this time glued along the $0$ framing. 
 	It is convenient to actually glue $-Y'$ instead of $Y'$: in such a way, we can define an orientation-reversing diffeomorphism between $-\partial Y'$ and $\partial X_L$ which gives the identifications
 	\[m'=\lambda, \quad \quad l' =\mu\]
 	between the standard basis $(m',l')$ of $H_1(Y';\Z)$ and the basis $(\mu, \lambda)$ of $H_1(\partial X_L;\Z)$. On the other hand, the meridian $m$ of $Y$ is identified with $p\mu+q\lambda$.
 	Schematically, we write 
 	\begin{equation} \label{eqproofdehnknot}
 	X_K\cup_\partial Y   \quad  \leadsto  \quad X_K\cup_\partial (-Y') \:\:\sqcup \:\: Y'\cup_\partial Y.
 	\end{equation}
 	The union of solid tori $Y'\cup_\Sigma Y$ is now given along a diffeomorphism which gives the identification of $m$ with $qm'+pl'$, so that the resulting manifold is the lens space $L(p,-q)\cong -L(p,q)$ (see Section \ref{sslens}). In particular, \eqref{eqproofdehnknot} can be rewritten as 
 	\[S_K(p/q) \quad  \leadsto  \quad S_K(0)  \:\:\sqcup \:\:  -L(p,q).  \]
 	Observe that the generator $1\in \Z/p=\pi_1(L(p,q))$ corresponds by construction to the longitude $l'$. In turn, $l'$ is glued in the surgery with the meridian $\mu$ of $K$. As a consequence, the extension of the representation $\alpha$ to $H_1(L(p,q);\Z)$ is the representation $\Z/p\to U(1)$ given by $1\mapsto \omega$.
 	Before applying the cut-and-paste formula, we observe that the result is trivially true for $\omega=1$. We shall hence suppose $\omega\ne 1$. 
 	Thanks to Theorem~\ref{theocutandpaste}, we can now compute
 	\begin{equation}\label{eqcutpasteknot}
 	\rho_\alpha(S_K(p/q))= \rho_\alpha(S_K(0)) - \rho(L(p,q),\omega) -\tau(V_{Y'}, V_{X_K}, V_Y)
 	\end{equation}
 	(as usual, the Maslov triple index in twisted homology is $0$ thanks to Remark~\ref{remarknontrivial} and the assumption $\omega\ne 1$).
 	Now, we know that the first summand in the right-hand term of \eqref{eqcutpasteknot} is minus the Levine-Tristram signature (see Remark~\ref{remcoltocol}), while the second summand coincides with the one of the statement. As a consequence, to complete the proof it is enough to show that $\tau(V_{Y'}, V_{X_K}, V_Y)=0$. We are going to describe the Lagrangians explicitly. Considering the identifications given by the gluings, in terms of the basis $(\mu,\lambda)$ of $H_1(\partial X_K;\C)$ we have
 	\[V_{Y'}=\Span_\C(\lambda), \quad  V_{X_K}=\Span_\C(\lambda),\quad  V_Y=\Span_\C(p\mu +q\lambda).\]
 	As the first two subspaces coincide, the Maslov triple index is $0$, and the proof is complete.
 \end{proof}

 We shall now prove the general version of Proposition \ref{propdehnknot}.
 \begin{theorem} \label{theorational}
 Let $L$ be an $n$-colored, $k$-component link that is component-to-color algebraically split. Let $\omega\in \T_*^n$, and let $\alpha\colon\pi_1(X_L)\to U(1)$ be the associated colored representation. Let $r$ be a compatible rational framing on $L$. Then, we have
 	\[
 	\rho_\alpha(M_L(r))= -\sigma_L(\omega) +  \sign \Lambda_r - \sum_{i=1}^k (\rho(L(p_i,q_i),\omega_{c(i)}) + \sgn(p_i/q_i)) ,
 	\]
 	where $p_i,q_i$ are coprime integers such that $r_i-f_i=p_i/q_i$ (here $f_i$ is the $i$-th coefficient of the Seifert framing).
 \end{theorem}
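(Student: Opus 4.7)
The plan is to adapt the strategy of Proposition \ref{propdehnknot}, using Corollary \ref{corseifert} as the base case instead of the $0$-framed surgery on a knot. Since $L$ is component-to-color algebraically split, the Seifert framing $f_L$ is compatible with $\alpha$ by Lemma \ref{lemmaseifert}, so Corollary \ref{corseifert} gives a closed-form expression for $\rho_\alpha(S_L(f_L))$. I would cut out the filling solid tori $Y=Y_1\sqcup\cdots\sqcup Y_k$ (glued along $r$) from $S_L(r)=X_L\cup_\partial Y$ and replace them with solid tori $Y'=Y'_1\sqcup\cdots\sqcup Y'_k$ glued along $f_L$, applying the scheme
\[
X_L\cup_\partial Y\quad\leadsto\quad X_L\cup_\partial(-Y')\:\:\sqcup\:\:Y'\cup_\partial Y.
\]
The first factor is $S_L(f_L)$ by construction. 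Doing the gluing analysis of Proposition \ref{propdehnknot} one component at a time, each byproduct piece $Y'_i\cup_\partial Y_i$ is orientation-preservingly diffeomorphic to $-L(p_i,q_i)$, and the induced extension of $\alpha$ identifies $1\in\Z/p_i\cong\pi_1(L(p_i,q_i))$ with $\omega_{c(i)}$.

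Applying Theorem \ref{theocutandpaste}, the twisted Maslov triple index vanishes by Remark \ref{remarknontrivial} (since every $\omega_s\ne 1$), and combining with Corollary \ref{corseifert} I obtain
\[
\rho_\alpha(S_L(r))= -\sigma_L(\omega) + \sign\Lambda_{f_L} - \sum_{i=1}^k \rho(L(p_i,q_i),\omega_{c(i)}) - \tau(V_{Y'},V_{X_L},V_Y),
\]
so the remaining task is to evaluate the untwisted Maslov triple index in $H_1(\partial X_L;\C)$.

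To handle this, I would introduce the auxiliary Lagrangian $\MM=\Span_\C\{\mu_1,\dotsc,\mu_k\}$ and use the cocycle identity of Proposition \ref{propmaslov}(ii) applied to the quadruple $(\MM,V_{Y'},V_{X_L},V_Y)$, which gives
\[
\tau(V_{Y'},V_{X_L},V_Y)=\tau(\MM,V_{Y'},V_{X_L})-\tau(\MM,V_{Y'},V_Y)+\tau(\MM,V_{X_L},V_Y).
\]
The outer two terms are $\sign\Lambda_{f_L}$ and $-\sign\Lambda_r$ by Lemma \ref{lemmasymplecticlink}(iii) (applied to the Seifert and $r$-framings respectively, combined with antisymmetry of $\tau$). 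For the middle term, both $V_{Y'}$ and $V_Y$ are spanned by framed longitudes that differ by $(r_i-f_i)\mu_i=(p_i/q_i)\mu_i$; a direct computation of the defining Hermitian form from Definition \ref{defmaslov}, using the intersection identities of Lemma \ref{lemmasymplecticlink}(i), produces a diagonal matrix with entries $-p_i/q_i$, hence $\tau(\MM,V_{Y'},V_Y)=-\sum_i\sgn(p_i/q_i)$. Substituting these three evaluations back, the $\sign\Lambda_{f_L}$ contributions cancel and the formula in the statement emerges.

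The main obstacle will be the careful sign and orientation bookkeeping in the cut-and-paste step: verifying that each byproduct $Y'_i\cup_\partial Y_i$ is $-L(p_i,q_i)$ (rather than $+L(p_i,q_i)$) and that the extension of $\alpha$ sends the generator to $\omega_{c(i)}$ requires a component-wise repetition of the gluing analysis performed in the knot case, with the standard longitude of $-Y'_i$ identified with $\mu_i$ and its meridian identified with $\lambda_i^f$. Once these identifications are in place, the Maslov computation via the cocycle trick is essentially mechanical.
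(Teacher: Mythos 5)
Your proposal is correct and follows essentially the same route as the paper's proof: the same cut-and-paste replacing the $r$-framed solid tori by Seifert-framed ones (with the component-wise gluing analysis from the knot case producing the $-L(p_i,q_i)$ summands), Corollary \ref{corseifert} as the base case, and the cocycle identity with the auxiliary Lagrangian $\MM$ reducing the untwisted Maslov index to the same three terms evaluated via Lemma \ref{lemmasymplecticlink}(iii) and the diagonal computation giving $-\sum_i\sgn(p_i/q_i)$. The only cosmetic difference is that the paper first permutes the arguments to $\tau(V_{Y'},V_Y,V_{X_L})$ before applying the cocycle relation, whereas you apply it directly to $\tau(V_{Y'},V_{X_L},V_Y)$; the resulting cancellations are identical.
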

 \begin{proof}
 	We generalize the cut-and-paste construction of Proposition~\ref{propdehnknot}, removing the union of solid tori $Y$ coming from the $r$-framed filling of $X_L$, and replacing them with a union of solid tori $Y'$ glued along the colored Seifert framing. As the difference between these framings is now given by the $k$-tuple $(p_1/q_i, \dotsc , p_k/q_k)$, the same argument used in the proof of Proposition~\ref{propdehnknot} (repeated now for each link component) implies that this cut-and-paste can be written as
 	\[S_L(r) \quad  \leadsto  \quad S_L(f_L)  \:\:\sqcup \:\: \Big(\bigsqcup_{i=1}^{k} -L(p_i,q_i) \Big).\] 
 	In particular, Theorem~\ref{theocutandpaste} gives in this case 
 	\[\rho_\alpha(M_L(r)) = \rho_\alpha( M_L(f_L)) - \sum_{i=1}^k \rho_\alpha(L(p_i,q_i))  - \tau(V_{Y'}, V_{X_L}, V_Y). \]
 	Applying Corollary~\ref{corseifert} and rearranging the variables of the Maslov triple index with Proposition \ref{propmaslov} (i), we can rewrite the last equation as 
 	\begin{equation}\label{eqprooftheolt}
 	\rho_\alpha(M_L(r)) = -\sigma_L(\omega) + \sign \Lambda_{f_L} - \sum_{i=1}^k \rho(L(p,q), \omega_{c(i)})  + \tau(V_{Y'}, V_Y, V_{X_L}).
 	\end{equation}
	The computation of the Maslov triple index is more involved than in the case of knots. 
 	As we did in the proof of Theorem~\ref{theointeger}, instead of trying to calculate it directly, we use the cocycle property of our three Lagrangians together with $\MM:=\{\mu_1,\dotsc , \mu_k\}$ to simplify this task. Namely, using Proposition \ref{propmaslov} (ii), we find
 	\begin{equation}\label{eqprooftheolt'} 
 	\tau(V_{Y'}, V_Y, V_{X_L}) = \tau (\MM,V_{Y'}, V_Y)- \tau(\MM, V_{Y'}, V_{X_L}) + \tau(\MM, V_Y, V_{X_L}).
 	\end{equation}
\begin{claim}
$	\tau (\MM,V_{Y'}, V_Y)= -\sum_{i=1}^k \sign(p_i/q_i)$.
\end{claim}
\begin{claim}
	$\tau(\MM, V_{Y'}, V_{X_L})= \sign\Lambda_{f_L}$ and $\tau(\MM, V_Y, V_{X_L})=\sign \Lambda_r$.	
\end{claim}
Thanks to \eqref{eqprooftheolt'}, the two claims (whose proof we postpone for the moment) combine to give
\[\tau(V_{Y'}, V_Y, V_{X_L}) = -\sum_{i=1}^k \sign(p_i/q_i)- \sign\Lambda_{f_L} + \sign \Lambda_r.\]
Substituting this value into \eqref{eqprooftheolt}, we obtain the formula in the statement of the theorem.
\begin{claimproof}1.
 	Let $\lambda_1,\dotsc , \lambda_k$ be the framed longitudes corresponding to the framing $r$, and let $\lambda_1',\dotsc , \lambda_k'$ be the framed longitudes of the Seifert framing. Then, it is clear that 
 	\[
 	V_Y=\Span_\C\{\lambda_1, \cdots , \lambda_k\}, \quad \quad V_{Y'}=\Span_\C\{\lambda_1', \cdots , \lambda_k'\}.
 	\]
 	Moreover, by definition we have $\lambda_i=\lambda_i^s+r_i\mu_i$ and $\lambda_i'=\lambda_i^s+f_i\mu_i$, so that the two sets of longitudes are related by 
 	\[\lambda_i=\lambda_i'+(r_i-f_i)\mu_i= \lambda_i'+ \lmfrac{p_i}{q_i} \mu_i.\]
 	In particular, the three Lagrangians in the first summand of  \eqref{eqprooftheolt'} all split according to the symplectic decomposition 
 	\[H_1(\partial X_L;\C)=\bigoplus_{i=1}^k \Span_\C\{\mu_i,\lambda_i\}.\]
 	As the pair $(\mu_i,\lambda_i')$ is a symplectic basis for $(H_1(T_i;\C), \cdot)$ (see Example \ref{extaudim2}), it is immediate to compute
 	\[ \tau (\MM,V_{Y'}, V_Y)= \sum_{i=1}^k \tau\Big(\mu_i, \lambda_i', \, \lambda_i' + \lmfrac{p_i}{q_i} \mu_i\Big)= -\sum_{i=1}^k \sign(p_i/q_i).\]
 \end{claimproof}
\begin{claimproof}2.
 The claim follows immediately from Lemma~\ref{lemmasymplecticlink} (iii), as we have $V_Y=\LL_r$ and $V_{Y'}=\LL_{f_L}$.
 	\end{claimproof}
 \end{proof}     
	The hypothesis of $L$ being component-to-color algebraically split is required in order to have a single nice explicit formula. This is because in this case we can apply the cut-and-paste formula after performing Dehn surgery along the Seifert framing, which is compatible with all representations thanks to Lemma~\ref{lemmaseifert}. However, even for links that are not component-to-color algebraically split, it is possible to deal with rational framings using Theorem~\ref{theocf} or Theorem~\ref{theointeger} as a starting point instead. In the former case, this is illustrated by the next remark.
	\begin{remark}
Suppose that $\alpha\colon H_I(X_L;\Z)\to U(1))$ is the colored representation associated to $\omega=(e^{2\pi in _1/q}, \dotsc , e^{2\pi i n_k/q})\in \T_*^n$, as in the statement of Theorem~\ref{theocf}, and suppose that $r$ is a compatible rational framing.
It is easy to see that, for this choice of $\alpha$, a compatible integer framing also exists. We can then apply \ref{theocf} with this integer framing, and then imitate the proof of Theorem~\ref{theorational} using the formula obtained in this way instead of Corollary~\ref{corseifert}. As a result, we will get a formula relating the rho invariant of $S_L(r)$ with the multivariable signature of $L$. However, this is better dealt with on case-by-case basis, as a general formula would be quite cumbersome.
\end{remark}

\begin{remark}
If $L$ is component-to-color algebraically split and $g$ is an integer framing on $L$ which is compatible with the given $\omega \in \T_*^n$, we can apply either Theorem~\ref{theointeger} or Theorem~\ref{theorational}. The fact that the resulting formulas are compatible can be verified by a quick computation using Corollary~\ref{corlensinteger}.
\end{remark}

\bibliographystyle{plain}
\bibliography{biblio2} 
\end{document}